\documentclass[11pt,reqno]{amsproc} 
\usepackage{amstext,amsmath,amssymb,amsthm}

\usepackage{tikz}
\usepackage{tikz-3dplot}

\usepackage{latexsym}
\usepackage{exscale}
\usepackage{color}

\newcommand{\C}{  \mathbb  C }
\newcommand{\Z}{  \mathbb Z }
\newcommand{\N}{  \mathbb N }
\newcommand{\Q}{  \mathbb Q }

\newcommand{\supp}{{\rm supp} \, }
\newtheorem{Thm}{Theorem}[section]

\newtheorem{Cor}[Thm]{Corollary}
\newtheorem{Prop}[Thm]{Proposition}
\newcommand{\dsize}{\displaystyle}
\newcommand{\cal}{\mathcal}
\numberwithin{equation}{section}
\newcommand{\E} {{\mathcal E}}
 
\renewcommand{\H} {\mathcal H } 
\renewcommand{\P} {\mathcal P }

\newcommand{\J} {\breve{J}}
\newcommand{\X} {\breve{X}} 
\newcommand{\q} {\cal Q} 
\newcommand{\w}{{\rm w}}

\newcommand{\Mod}[1]{\, (\mathrm{mod}\, #1)}

\theoremstyle{remark}

\newtheorem{Rem}{Remark}[section]

\newcommand{\BEL}{\begin{equation}\label}
	\newcommand{\EE}{\end{equation}}

\begin{document}

	\title[Novel properties of polynomials] {\large  Novel properties of polynomials}

	\author{L. De Carli}
	\address{Laura De Carli, Department of Mathematics, Florida International University,   Miami, FL 33199, USA.}
	\email{decarlil@fiu.edu}

	\author{M.   Laporta}
	\address{Maurizio Laporta, Dipartimento di Matematica e Applicazioni, Università degli Studi di Napoli  "Federico II", 80126 Napoli, Italy}
	\email{mlaporta@unina.it }
	
	\keywords{Homogeneous symmetric polynomials, the Vieta formulas, roots of unity, Kronecker polynomials}
	
	\subjclass 
	{12D05 
		05E05 
		11C08 
		11A07 
 }

\maketitle

\begin{abstract} 
	We establish necessary and sufficient conditions for a complex polynomial to have a prescribed multiset of roots, expressed in terms of its coefficients and complete homogeneous symmetric polynomials. These conditions extend the classical Vieta formulas and yield explicit formulas for the coefficients of the corresponding quotient. We apply our results to   obtain characterizations of Kronecker polynomials through identities involving symmetric polynomials and construct Borwein multiples of products of cyclotomic polynomials.
 \end{abstract}

  \section{Introduction and statement of the results}\label{hsp}
  
  The main purpose of this paper is to establish new divisibility
  criteria for complex polynomials and explicit relations between
  their coefficients and subsets of roots.  
    We begin by recalling the symmetric polynomials that are involved in 
  this study.
   
  For $\vec z=(z_1,\ldots,z_n)\in\C^n$ and $k\in\N$, the \emph{elementary symmetric polynomial} $\E_k(\vec z\, )$ and the 
  \emph{complete homogeneous symmetric polynomial} $\H_k(\vec z\, )$ are defined as
$$
  \cal E_k(\vec z\, )
  :=
  \sum_{1\le j_1<\cdots<j_k\le n}
  z_{j_1}\cdots z_{j_k},
  \qquad
  \cal H_k(\vec z\, )
  :=
  \sum_{1\le j_1\le\cdots\le j_k\le n}
  z_{j_1}\cdots z_{j_k}.
 $$
  We set $\cal E_0(\vec z\, )=\cal H_0(\vec z\, )=1$ and $\cal E_k(\vec z\, )=0$ when $k>n$.
  Both polynomials are homogeneous of degree $k$  and are invariant by permutation of the variables.
  We have used  \cite{Mcdonald,St}  for the definitions and basic properties of these polynomials, some of which are recalled in   \S 2.  
  
  \smallskip
  The classical Vieta formulas express the coefficients of a complex polynomial
  in terms of the elementary symmetric polynomials evaluated at its roots.
  Specifically, if
  $$
  	\cal P(z)
  	:=a_0\prod_{j=0}^{m-1}(z-x_j)
  	=\sum_{k=0}^{m}a_kz^{m-k},
  	 $$
 with $a_j\in\C$ and $a_0\ne0 $, then
  \begin{equation}\label{e-classic-Viete}
  	\frac{a_k}{a_0}
  	=(-1)^k\cal E_k(x_0,\ldots,x_{m-1}),
  	\qquad 0\le k\le m.
  \end{equation}
 Since   roots of polynomials should be 
 considered with their multiplicities,   the language of  multisets provides the right framework for some of our results and proofs.

 We associate to a given vector
 $\vec y:=(y_0,y_1,\ldots,y_{n-1})\in\mathbb C^n$
 the \emph{multiset} of its components, denoted by
 $$
 Y:=\{y_0,y_1,\ldots,y_{n-1}\}.
 $$
 While the order of the components matters in the vector $\vec y$,
only the values of the elements
 and their multiplicities are important in the multiset $Y$.
 
 Given $y\in\C$, the \emph{multiplicity} ${\rm m}(y;Y)$ is
 the number of occurrences of $y$ in $Y$. Plainly,
 ${\rm m}(y;Y)=0$ if and only if $y$ does not occur in $Y$.
 
 The \emph{support} of $Y$ is the set of its distinct elements.
 It is denoted as ${\rm supp}\,Y$. The \emph{cardinality} of $Y$  is
$
 |Y|:=\sum_{y\in{\rm supp}\,Y}{\rm m}(y;Y)=n.
$

Given multisets $X,\ Y$, their \emph{multiset union},
denoted by $X\sqcup Y$, consists of all $y$ with multiplicity
$
{\rm m}(y;X\sqcup Y):={\rm m}(y;X)+{\rm m}(y;Y)\ge 1.
$

We say that $X$ is a \emph{submultiset} of $Y$, and write
$X\sqsubseteq Y$, if  $\supp X\subseteq\supp Y$ and 
$
{\rm m}(y;X)\le {\rm m}(y;Y)$
 for every $y\in\supp X$.

If $X\sqsubseteq Y$, then the \emph{multiset complement} of $X$
  in $Y$, denoted by $\X$, is formed with  all $y\in\supp Y$ with multiplicity
  $ 
  {\rm m}(y;\X):={\rm m}(y;Y)-{\rm m}(y;X)$.
 Note  that the supports of $X$ and $\X$ may overlap.
  For instance,
  given the multisets $Y:=\{a,\, a,\, b,\, b, c\}$ and $X := \{a, a, b\}$, one has $X \sqsubseteq Y $, 
  $\supp Y =\{a, b, c\}$,  and $\X =\{b,c\}$. 
  
  \smallskip
  Since symmetric polynomials are invariant under permutation
  of their variables, we  will indifferently use  the notation  $\cal E_k(Y)$ and $\E_k(\vec y\, )$ (resp. $\cal H_k(Y)$ and $\H_k(\vec y\, )$).
    We also adopt the convention
  \begin{equation}\label{convention}
  	\cal H_k(\emptyset)=\cal E_k(\emptyset)
  	=
  	\begin{cases}
  		1,&k=0,\\
  		0,&k\ge1.
  	\end{cases}
  \end{equation}

   \subsection{Main results} 
 As already mentioned, when counted with their multiplicity, the roots of a complex polynomial form a multiset, that we call 
  the \emph{roots multiset}.  In this subsection 
  we state explicit relations between
  the coefficients  of a complex polynomial when a submultiset  of its roots multiset  is given. Further, we characterize
  such submultisets.

   \begin{Thm}\label{T-Viete-3} 
   		Let
   		$
   		\P(z)=\sum_{j=0}^{m}a_jz^{m-j}$
   		be a complex polynomial of degree  $m\in\N$ and let $X_m$ be its roots multiset.
   		Given  any multiset  $J_n$  of complex numbers such that
   		$1\le n=|J_n|\le m$,  let us consider the monic polynomial 
   		$$
   		\q_n(z):=\prod_{w\in J_n}(z-w)
   		$$whose roots multiset is $J_n$;
   		 define
   		$$
   		c_k:=\sum_{j=0}^{k}a_j\H_{k-j}(J_n),
   		\qquad 0\le k\le m.
   		$$
   		The following properties are equivalent:
   		\begin{enumerate}
   			\item[(a)] $J_n\sqsubseteq X_m$.
   			\item[(b)] $\q_n(z)$ divides $\P(z)$, and
   			$ \dsize
   			\P(z)=\q_n(z)\sum_{k=0}^{m-n}c_kz^{m-n-k}.
   			$ 
   			\item[(c)] $c_k=0$ for every $k\in\{m-n+1,m-n+2,\ldots,m\}$.
   		\end{enumerate}
   \end{Thm}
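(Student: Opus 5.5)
The proof rests on one generating-function identity. Since $\prod_{w\in J_n}(1-wt)^{-1}=\sum_{k\ge0}\H_k(J_n)t^k$, the reversed polynomial $\tilde\P(t):=t^m\P(1/t)=\sum_{j=0}^m a_jt^j$ satisfies
$$
\frac{\tilde\P(t)}{\prod_{w\in J_n}(1-wt)}=\sum_{k\ge0}c_kt^k
$$
as formal power series, where $c_k=\sum_{j=0}^{\min(k,m)}a_j\H_{k-j}(J_n)$ for every $k\ge0$. For $k\le m$ this is exactly the definition in the statement. Equivalently,
$$
\tilde\P(t)=\tilde\q_n(t)\,C(t),
$$
where $\tilde\q_n(t)=t^n\q_n(1/t)=\prod_{w\in J_n}(1-wt)$ and $C(t)=\sum_k c_kt^k$.

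The second ingredient is ordinary Euclidean division. Write $\P=\q_n S+R$ with $\deg S=m-n$ and $\deg R\le n-1$. Reversing, this reads
$$
\tilde\P(t)=\tilde\q_n(t)\tilde S(t)+t^{m-n+1}\tilde R(t),
$$
where $\tilde S(t)=t^{m-n}S(1/t)$ and $\tilde R(t)=t^{n-1}R(1/t)$ are polynomials of degree at most $m-n$ and $n-1$ respectively. Since $\tilde\q_n(0)=1$, the series $\tilde\q_n$ is invertible. Comparing with the identity above gives
$$
C(t)=\tilde S(t)+t^{m-n+1}\frac{\tilde R(t)}{\tilde\q_n(t)}.
$$
Two consequences follow. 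First, $c_0,\dots,c_{m-n}$ are precisely the coefficients of $\tilde S$, so $S(z)=\sum_{k=0}^{m-n}c_kz^{m-n-k}$. Second, the coefficients $c_{m-n+1},\dots,c_m$ are the first $n$ Taylor coefficients of $\tilde R/\tilde\q_n$.

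The equivalences then go as follows.

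(a)$\Leftrightarrow$(b): $J_n\sqsubseteq X_m$ means each $w$ occurs as a root of $\P$ with multiplicity at least $\mathrm m(w;J_n)$. Since $\C[z]$ is a UFD and $\q_n=\prod(z-w)^{\mathrm m(w;J_n)}$ with distinct linear factors coprime, this is equivalent to $\q_n\mid\P$. The explicit form of the quotient is then the identification $S=\sum c_kz^{m-n-k}$ obtained above.

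(b)$\Rightarrow$(c): If $R=0$, then $C=\tilde S$ is a polynomial of degree at most $m-n$, so $c_k=0$ for all $k>m-n$, and in particular for $m-n<k\le m$.

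(c)$\Rightarrow$(b): This is the step I expect to be the main point. The hypothesis says $\tilde R/\tilde\q_n=O(t^n)$. Multiplying by $\tilde\q_n$ gives $\tilde R(t)=O(t^n)$, but $\deg\tilde R\le n-1$, so $\tilde R=0$. Hence $R=0$ (the reversal of a polynomial of degree at most $n-1$ is injective), which gives divisibility.

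The only care needed is with degrees. $\tilde S$ has degree exactly $m-n$ with constant term $a_0$, while $\tilde R$ may have lower degree, in which case its reversal still uses the factor $t^{n-1}$. The definition of $c_k$ also does not need the $\min$ for $k\le m$.
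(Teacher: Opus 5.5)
Your proof is correct. It shares the paper's framework: the formal identity $t^m\P(1/t)=\prod_{w\in J_n}(1-wt)\sum_{k\ge0}c_kt^k$, with (a)$\Leftrightarrow$(b) by factorization and (b)$\Rightarrow$(c) by polynomiality of the series, as in the paper. Your argument for (c)$\Rightarrow$(b) is different, though.

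The paper never divides with remainder. It writes $t^n\q_n(1/t)=1+b_1t+\cdots+b_nt^n$ and reads off the linear recurrence $c_k+b_1c_{k-1}+\cdots+b_nc_{k-n}=0$ for $k>m$, whose characteristic polynomial is $\q_n$. It then propagates the $n$ consecutive zeros $c_{m-n+1}=\cdots=c_m=0$ by induction, so $\sum_k c_kt^k$ is a polynomial of degree at most $m-n$ and the quotient formula drops out.

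You instead reverse the Euclidean division $\P=\q_nS+R$, split $\sum_kc_kt^k=\tilde S(t)+t^{m-n+1}\tilde R(t)\prod_{w\in J_n}(1-wt)^{-1}$, and kill $\tilde R$ by a degree count. Your route yields more. It shows unconditionally that $c_0,\dots,c_{m-n}$ are the coefficients of the Euclidean quotient. It also shows that $(c_{m-n+1},\dots,c_m)$ is the image of the $n$ coefficients of $\tilde R$ under a unitriangular, hence invertible, map: multiplication by $\sum_k\H_k(J_n)t^k$ modulo $t^n$. So these $n$ numbers determine the remainder, which explains why exactly $n$ conditions appear in (c). The paper's route has the advantage of staying entirely inside the power-series identity and avoiding the division algorithm.

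One harmless slip in your closing remark: $\tilde S(t)=t^{m-n}S(1/t)$ has constant term $a_0$, but its degree is only \emph{at most} $m-n$. For example, $\P=z^2$ and $\q_1=z$ give $S=z$ and $\tilde S=1$. It is $S$ that has degree exactly $m-n$. Your argument only uses the upper bound, so nothing breaks.
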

   This theorem  considerably improves on
 \cite[Th.\ 1.1]{DEL}, where we have just established that the property (b) implies (c), whereas here we show that the converse holds.
 
 Further, the
  Vieta formulas \eqref{e-classic-Viete} are recovered by 
  taking $J_n=X_m$, i.e., $n=m$, in
  Theorem \ref{T-Viete-3}. In order to see it, let us first recall a well known identity relating complex and homogeneous  symmetric polynomials given by
  \begin{equation}\label{e-wellknownidentity}
  	\sum_{j=0}^{k}(-1)^j
  	\cal E_j(\vec z\, )\cal H_{k-j}(\vec z\, )=0,
  	\qquad (k\in \N).
  \end{equation}
  See Proposition \ref{P-Main} below for a more general identity.
  Then, note that for $J_n=X_m$ the condition (c) of Theorem \ref{T-Viete-3} becomes
  \begin{equation}\label{(3)X_m}
   \sum_{j=0}^k a_j\H_{k-j}(X_m)=0,
  \qquad k=1,\ldots,m.
  \end{equation}
In particular, for $k=1$ this gives
$$
a_0\H_1(X_m)+a_1\H_0(X_m)=a_0\H_1(X_m)+a_1=0,
$$ 
that is 
 $\H_1(X_m)=\E_1(X_m)=-a_1/a_0$, i.e., the formula \eqref{e-classic-Viete} for $k=1$. 

Now, for $k=2$ observe that \eqref{e-wellknownidentity} yields
$\H_2(X_m)=\E_1(X_m)^2-\E_2(X_m)$. Thus, from \eqref{(3)X_m} for $k=1,2$ it follows that
$$
\frac{a_2}{ a_0}=-\H_2(X_m)-a_1\H_1(X_m)/a_0=
-\H_2(X_m)+\E_1(X_m)^2=\E_2(X_m),
$$  
which is \eqref{e-classic-Viete} for $k=2$. The remaining case $3\le k\le m$ can be accomplished in a completely analogous way.

  \smallskip
     
  Before presenting some consequences of Theorem~\ref{T-Viete-3},
  let us introduce the following notation.  {Given $ h\in\{1,2,\ldots,m\}$}, 
  we group  the terms of the complex polynomial 
  $ \dsize 
  \P(z):=\sum_{k=0}^{m}a_kz^{m-k}$, with $ {\rm deg}\P(z)=m\in\N,
 $ 
 according to their exponents modulo 
  $h$, so that
  $$
  \P(z)=\sum_{s=0}^{h-1}\sum_{j=0\atop j\equiv s\Mod h}^ma_{m-j}z^j:=\sum_{s=0}^{h-1}z^s\P_{h,s}(z^h),
$$
  where
  \begin{equation}\label{e-def-Ps}
  	\P_{h,s}(z)
  	:=
  	\sum_{q=0}^{[(m-s)/h]}
  	a_{m-qh-s}z^q,
  	\qquad 0\le s<h.
  \end{equation}
  Hereafter, $[x]$ denotes the integer part of the real number $x$.

 Note that 
  $$
  0\le \left[\frac{m+1}{h}\right]-1\le {\rm deg} \P_{h,s}(z)=\left[\frac{m-s}{h}\right]\le \left[\frac{m}{h}\right]\le m.
  $$

   \begin{Cor}\label{C-h-sections} Let  $\P(z)$ defined as before and let $h\in\{1,2,\ldots,m\}$ be fixed.  
   	We have that
   	$\cal G(z^h)$ divides $\cal P(z)$ 
   	if and only if
   	$ 
   	\cal G(z)$ divides $\cal P_{h,s}(z)$
   	for every $s\in\{0,1,\ldots,h-1\}$. 
   	  \end{Cor}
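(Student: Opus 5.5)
The plan is to use the decomposition $\P(z)=\sum_{s=0}^{h-1}z^s\P_{h,s}(z^h)$ together with a uniqueness argument for this decomposition. Here $\cal G$ is a nonzero complex polynomial, which we may take monic.

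The direction ``$\Leftarrow$'' is immediate. If $\P_{h,s}(z)=\cal G(z)\cal R_s(z)$ for each $s$, then
$$
\P(z)=\sum_{s=0}^{h-1}z^s\cal G(z^h)\cal R_s(z^h)=\cal G(z^h)\sum_{s=0}^{h-1}z^s\cal R_s(z^h),
$$
so $\cal G(z^h)$ divides $\P(z)$.

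For ``$\Rightarrow$'', suppose $\P(z)=\cal G(z^h)\cal S(z)$ for some polynomial $\cal S$.
\begin{enumerate}
\item[(i)] Decompose $\cal S$ in the same way: $\cal S(z)=\sum_{s=0}^{h-1}z^s\cal S_{h,s}(z^h)$, where $\cal S_{h,s}$ collects the monomials of $\cal S$ whose exponents are $\equiv s \Mod h$.
\item[(ii)] This gives $\P(z)=\sum_{s}z^s\cal G(z^h)\cal S_{h,s}(z^h)$.
\item[(iii)] Every monomial in $z^s\cal G(z^h)\cal S_{h,s}(z^h)$ has exponent congruent to $s$ modulo $h$. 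The decomposition of a polynomial into pieces supported on the residue classes modulo $h$ is unique, since distinct classes use disjoint sets of exponents. Comparing residue classes therefore yields $z^s\P_{h,s}(z^h)=z^s\cal G(z^h)\cal S_{h,s}(z^h)$.
\item[(iv)] Cancel $z^s$ and substitute $w=z^h$. Two polynomials in $w$ that agree as polynomials in $z$ after $w\mapsto z^h$ are identical, because the map $\sum b_qw^q\mapsto\sum b_qz^{qh}$ is injective. Hence $\P_{h,s}(w)=\cal G(w)\cal S_{h,s}(w)$ for every $s$.
\end{enumerate}

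A root-based route through Theorem~\ref{T-Viete-3} is also available. Let $\zeta$ be a primitive $h$-th root of unity and $J$ the roots multiset of $\cal G$. The roots multiset of $\cal G(z^h)$ consists of the $h$-th roots of the elements of $J$, and applying $\P(\zeta^r z)$ for $r=0,\dots,h-1$ isolates each section through a Vandermonde system. The coefficient comparison above avoids any multiplicity bookkeeping, so I prefer it.

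The only delicate point is the uniqueness of the residue-class decomposition, and this is elementary. The degenerate cases are harmless: $\cal G$ a nonzero constant is trivial, and the degree bound $h\le m$ is not actually needed for the argument.
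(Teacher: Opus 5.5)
Your proof is correct and follows essentially the same route as the paper. Both decompose the cofactor into residue-class sections and use that distinct residue classes modulo $h$ occupy disjoint sets of exponents, which forces $z^s\P_{h,s}(z^h)=z^s\cal G(z^h)\cal S_{h,s}(z^h)$ for each $s$. Your step (iv), which makes explicit the injectivity of the substitution $w\mapsto z^h$ in order to conclude $\cal G(z)\mid\P_{h,s}(z)$, spells out a point the paper leaves implicit.
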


   \smallskip

    The next corollary characterizes the divisibility of
    $\P(z)$ by the complex polynomial 
    $ \dsize
 \q(z)= \sum_{j=0}^{n}b_jz^{n-j},\ \hbox{with}\  n\leq m,
 $ 
 in terms of their coefficients.

   Let us first recall that, given $k\in\N_0$ and  $\vec\alpha:=(\alpha_1,\ldots,\alpha_n)\in\mathbb N_0^n$, the \emph{multinomial coefficient} is defined as \cite[\S 1.2]{St1}
   	$$
   	  \binom{k}{\alpha_1,\ldots,\alpha_n}:= 
   	  \frac{k!}{\alpha_1!\cdots\alpha_n!}=
   \binom{k}{\alpha_1}
   \binom{k-\alpha_1}{\alpha_2}
   \cdots
   \binom{k-\alpha_1-\cdots-\alpha_{n-1}}{\alpha_n}
  .
   $$
   Further, we
    introduce the following notation:
   $$
   \Sigma(\vec\alpha):=\sum_{j=1}^{n}\alpha_j,
   \qquad
   \w(\vec\alpha):=\sum_{j=1}^{n}j\alpha_j.
   $$

   \begin{Cor}\label{C-divis-cond}
    Let $\P(z)= \sum_{k=0}^{m}a_kz^{m-k}$  and $\q(z)=\sum_{k=0}^{n}b_kz^{n-k}$ be  complex polynomial, with  $1\leq n\leq m$.  We have that $\q(z)$ divides $\P(z)$ if and only if
   \begin{align*}
    a_k+\sum_{\ell=1}^{k}(-1)^\ell
    \sum_{\substack{1\le m_1,\ldots,m_\ell\le n }}
    a_{k-(m_1+\cdots+m_\ell)}
    b_{m_1}\cdots b_{m_\ell}&=\\
   	\sum_{\substack{\vec\alpha\in\mathbb N_0^n\\\w(\vec\alpha)\le k}}
    	(-1)^{\Sigma(\vec\alpha)}
   	\binom{\Sigma(\vec\alpha)}{\alpha_1,\ldots,\alpha_n}
   	a_{k-\w(\vec\alpha)}
   	\prod_{j=1}^{n}b_j^{\alpha_j}&=0
  \end{align*}
  for every $k\in\{m-n+1,\ldots,m\}$.
   \end{Cor}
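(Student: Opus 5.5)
The plan is to reduce the corollary to the equivalence (b)$\Leftrightarrow$(c) of Theorem~\ref{T-Viete-3} by writing the complete homogeneous symmetric polynomials of the roots of $\q$ in terms of its coefficients. As in the displayed formula, where $b_0$ does not appear, I take $\q$ to be monic, $b_0=1$. If $\q$ is not monic, I would first replace each $b_j$ by $b_j/b_0$; this does not change divisibility.

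\emph{Step 1: reduce to condition (c).} Let $J_n$ be the roots multiset of $\q$, so $\q(z)=\prod_{w\in J_n}(z-w)=\q_n(z)$ in the notation of Theorem~\ref{T-Viete-3}. By unique factorization in $\C[z]$, $\q$ divides $\P$ exactly when $J_n\sqsubseteq X_m$, which is (a). By Theorem~\ref{T-Viete-3}, this is equivalent to
$$
c_k=\sum_{j=0}^k a_j\H_{k-j}(J_n)=0,\qquad m-n+1\le k\le m .
$$
It therefore suffices to show that $c_k$ equals each of the two expressions in the statement.

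\emph{Step 2: expand the $\H_r(J_n)$ through a generating function.} I use the standard identity
$$
\sum_{r\ge0}\H_r(J_n)t^r=\prod_{w\in J_n}\frac1{1-wt}=\frac{1}{t^n\q(1/t)}=\frac1{1+b_1t+\cdots+b_nt^n},
$$
valid as formal power series. Write $B(t)=b_1t+\cdots+b_nt^n$, which has zero constant term. Then $1/(1+B(t))=\sum_{\ell\ge0}(-1)^\ell B(t)^\ell$ as formal series. Extracting the coefficient of $t^r$ gives
$$
\H_r(J_n)=\sum_{\ell\ge0}(-1)^\ell\sum_{\substack{1\le m_1,\ldots,m_\ell\le n\\ m_1+\cdots+m_\ell=r}}b_{m_1}\cdots b_{m_\ell}.
$$
Only $\ell\le r$ contributes, and $\ell=0$ contributes only when $r=0$.

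\emph{Step 3: assemble $c_k$.} Substitute this into $c_k=\sum_j a_j\H_{k-j}(J_n)$ and put $j=k-(m_1+\cdots+m_\ell)$. The $\ell=0$ term gives $a_k$, and the remaining terms give the first expression in the statement, with the sum over $\ell$ running from $1$ to $k$. For the second expression, group the ordered tuples $(m_1,\ldots,m_\ell)$ by their type $\vec\alpha\in\N_0^n$, where $\alpha_j$ is the number of indices $i$ with $m_i=j$. For a tuple of type $\vec\alpha$ we have $\ell=\Sigma(\vec\alpha)$, $m_1+\cdots+m_\ell=\w(\vec\alpha)$ and $b_{m_1}\cdots b_{m_\ell}=\prod_j b_j^{\alpha_j}$. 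The number of tuples of a given type is the multinomial coefficient $\binom{\Sigma(\vec\alpha)}{\alpha_1,\ldots,\alpha_n}$. The constraint $j\ge0$ becomes $\w(\vec\alpha)\le k$, and $\vec\alpha=0$ recovers the term $a_k$.

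The only step needing care is this bookkeeping: the generating-function identity, the justification of the formal geometric expansion (which holds because $B$ has no constant term), and the count of ordered tuples of a given type by the multinomial coefficient. Everything else follows directly from Theorem~\ref{T-Viete-3}.
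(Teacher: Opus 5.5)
Your proposal is correct and follows the paper's route. Both reduce the statement to condition (c) of Theorem~\ref{T-Viete-3} and expand $\H_r(J_n)$ through the geometric series of $1/(1+b_1t+\cdots+b_nt^n)$; the paper does this via Proposition~\ref{P-H-E} together with the Vieta relations $\E_j(J_n)=(-1)^jb_j$, while you inline that expansion, and your explicit remarks that $\q$ must be monic (or normalized) and that $a$-indices below zero are read as zero make precise assumptions the paper leaves implicit.
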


 \subsection{Further results}\label{furtherresults}  In \cite{DL3} we have recently proved that if 
 the roots multiset $X_m$ of a complex and monic
 polynomial  $\P(z)$ consists of $m={\rm deg}\cal P(z)$ distinct $n$th roots of unity, then  the cofactor of $\P(z)$
 in $z^n-1$ is given by 
$$
\frac{z^n-1}{\P(z)}=\sum_{j=0}^{n-m}\H_j(X_m)z^{n-m-j}.
$$

 This result motivates the more general study of the product $\P(z)\q_r(z)$ when 
  $X_m$ is  an arbitrary root multiset and
$$
 \q_r(z):=\sum_{s=0}^r\H_s(X_m)z^{r-s},
 \ \hbox{with}\ {\rm deg}\q_r(z)=r\in\N_0.
$$
The next proposition identifies $\q_r(z)$ as the
 quotient in the division of $z^{m+r}$ by $\P(z)$ and
 gives an explicit formula for the remainder.
 An equivalent formula appears in
 \cite[Eq. (18)]{WuYang}; we include a short proof of our proposition in \S \ref{Ths}.

  \begin{Prop}\label{P-product2poly}
 	Let $\P(z)=\sum_{j=0}^m a_jz^{m-j}$ be a complex and monic
 	polynomial of degree $m\in\N$ with roots multiset $X_m$.
 	For any given $r\in\N_0$ we have that 	$\P(z)\q_r(z)=z^{m+r} +\cal R_r(z)$, where
 $$
 	\q_r(z):=\sum_{s=0}^r\H_s(X_m)z^{r-s},\qquad 
 	\cal R_r(z):=\sum_{k=1}^{m }c_{k,r}z^{m-k} ,
 $$
\begin{equation}\label{e-ckr}
 \mbox{with} \hskip 1.5 cm 	c_{k,r}:=-\sum_{j=0}^{k-1}a_j\H_{k+r-j}(X_m),
 	\quad k\in\{1,\ldots,m\}.
 \end{equation}
 In particular, for any given $r\in\N_0$ and $s\in\C$, the polynomial $\P(z)$ divides
 $z^{m+r}-s$ if and only if
 $$
 \sum_{j=0}^{k-1}a_j\H_{k+r-j}(X_m)=	\begin{cases}
 	0,& \mbox{if  $k\in\{1,\ldots,m-1\}$},\\
 	s,& \mbox{if  $k=m$}.
 \end{cases}
 $$
 \end{Prop}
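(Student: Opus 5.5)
We expand the product $\P(z)\q_r(z)$ directly and collect coefficients of each power of $z$, using the identity \eqref{e-wellknownidentity} together with the classical Vieta formulas \eqref{e-classic-Viete}. Since $\P$ is monic, $a_j=(-1)^j\E_j(X_m)$. Writing
$$
\P(z)\q_r(z)=\sum_{j=0}^m\sum_{s=0}^r a_j\H_s(X_m)z^{m+r-j-s},
$$
the coefficient of $z^{m+r-t}$, for $0\le t\le m+r$, is $\sum a_j\H_{t-j}(X_m)$. Here the sum runs over $\max(0,t-r)\le j\le \min(t,m)$.

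The key steps are as follows.

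\emph{Step 1.} For $t=0$ the coefficient is $a_0\H_0=1$.

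\emph{Step 2.} For $1\le t\le r$ the range is $0\le j\le \min(t,m)$. If $t>m$, the missing terms involve $\E_j$ with $j>m$, which vanish. So the coefficient is $\sum_{j=0}^t(-1)^j\E_j(X_m)\H_{t-j}(X_m)=0$ by \eqref{e-wellknownidentity}.

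\emph{Step 3.} For $t=r+k$ with $1\le k\le m$, the range is $k\le j\le m$; terms with $j>t$ do not occur, since $j\le m\le t$ fails only when $t<m$, and then $j\le t$ is automatic. Subtracting this sum from the vanishing full sum $\sum_{j=0}^{t}a_j\H_{t-j}(X_m)=0$ (again by \eqref{e-wellknownidentity}, with $a_j=0$ for $j>m$) gives
$$
\sum_{j=k}^{m}a_j\H_{r+k-j}(X_m)=-\sum_{j=0}^{k-1}a_j\H_{k+r-j}(X_m)=c_{k,r}.
$$
This is the coefficient of $z^{m-k}$, which yields $\P\q_r=z^{m+r}+\cal R_r$.

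\emph{Step 4.} For the divisibility claim, write $z^{m+r}-s=\P(z)\q_r(z)-(\cal R_r(z)+s)$, where $\deg(\cal R_r+s)<m$. By uniqueness of division with remainder, $\P$ divides $z^{m+r}-s$ if and only if $\cal R_r(z)+s\equiv0$. This means $c_{k,r}=0$ for $k<m$ and $c_{m,r}=-s$, which is exactly the stated condition after a sign change.

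The main obstacle is only the index bookkeeping in Steps 2–3, specifically the boundary cases $t>m$ versus $t\le m$ and the truncation of the $j$-range. The uniform treatment is to set $a_j=0$ for $j>m$ and $\H_{i}=0$ for $i<0$, and to note that the identity $\sum_{j=0}^t a_j\H_{t-j}(X_m)=0$ holds for all $t\ge1$.
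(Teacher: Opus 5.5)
Your proof is correct and is essentially the paper's argument. The paper encodes the vanishing identity as $\mathcal A(t)H(t;X_m)=1$ with $\mathcal A(t)=t^m\P(t^{-1})$, and reads off the coefficients of $\mathcal A(t)\sum_{s=0}^r\H_s(X_m)t^s=1-\mathcal A(t)\sum_{s>r}\H_s(X_m)t^s$; this is exactly your subtraction of the partial sum from the vanishing full sum $\sum_{j=0}^{r+k}a_j\H_{r+k-j}(X_m)=0$, followed by the same uniqueness-of-remainder step.

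One small fix in Step 3: when $r+k<m$ the actual range is $k\le j\le r+k$. Extending it to $j\le m$ is justified by your convention $\H_i(X_m)=0$ for $i<0$, not by the sentence you wrote there.
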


 \begin{Rem}\label{C-binomial-divisibility} By taking $s=0$ we see that $\P(z)$ divides
 	$z^{m+r}$ if and only if $c_{k, r}=0$ for every $k\in\{1\ldots,m\}$. In other words,
 	if the roots multiset $X_m$ of $\P(z)$ is such that ${\rm supp}X_m\not=\{0\}$, then for every
 	$r\in\N_0$ there exists $k\in\{1,\ldots,m\}$ such that
 	$$
 	\sum_{j=0}^{k-1}a_j\H_{k+r-j}(X_m)=\sum_{j=0}^{k-1}(-1)^j\E_j(X_m)\H_{k+r-j}(X_m)\ne0.
 	$$
 	 This should be compared to the standard identity \eqref{e-wellknownidentity}.
 \end{Rem}

  		\subsection{A criterion for Kronecker polynomials}

  		A monic polynomial
  		with integer coefficients is a \emph{Kronecker polynomial} if all
  		its zeros belong to the closed unit disk. A classical theorem of
  		Kronecker shows that every nonzero root of such a polynomial is in
  		fact a root of unity \cite{K}. Consequently, if $\cal K(z)$ is a  Kronecker polynomial, then it
  		admits a factorization of the form
  		\begin{equation}\label{e-Kronecker-factorization}
  			\cal K(z)
  			=
  			z^r\prod_{j\in K}\Phi_j(z)^{b_j},
  		\end{equation}
  		where $K\subset\mathbb N$ is finite, $r\in\mathbb N_0$,
  		$b_j\in\mathbb N$, and $\Phi_j(z)$ is the cyclotomic
  		polynomial of index $j$. Conversely, every polynomial of the form
  		\eqref{e-Kronecker-factorization} is a Kronecker polynomial.
  		
  		 Recall that the cyclotomic polynomial of index $n\in\N$ is defined as
  		$$
  		\Phi_n(z):=\prod_{s=1\atop \gcd(s,n)=1}^n(z- e^{2\pi is/n} ),
  		$$
  		with ${\rm deg}	\Phi_n(z)=\varphi(n):=|\{s\in\N:\, 1\le s\le n\, \hbox{and}\,  \gcd(s,n)=1\}|$, the Euler totient function.
  		It is well known that its coefficients belong to $\Z$, it is irreducible over $\Q$, and 
  		$ \dsize
  		z^n-1=\prod_{d\mid n}\Phi_d(z).
  		$ 
  	We refer the reader to   \cite{DF} \S 13.6  and also to the survey papers \cite{Sanna, T}  for an overview of these polynomials. 
  		
  		\medskip
  		 
  		 The following proposition characterizes Kronecker polynomials whose roots are nonzero. It is used to derive the criterion of Theorem \ref{T-kronekerequiv-alt} below. In \S \ref{Ths} we include a proof of such a proposition.

  	 \begin{Prop}\label{P-Kronecker-divisibility} 
  	 	Let $\cal K(z)$ be a monic polynomial with integer coefficients such that
  	 	$\cal K(0)\ne0$. The following properties are equivalent:
  	 	\begin{enumerate}
  	 		\item[(a)] $\cal K(z) $ is a Kronecker polynomial.
  	 		\item[(b)] 
  	 		$\cal K(z)$ divides $\cal K(z^t)$ for some  integer $t>1$.

  	 		\item[(c)] 
  	 		$\cal K(z)$ divides $(z^N-1)^s$ for some  integers $N>1$ and $s\ge 1$.
  	 	\end{enumerate}
  	 \end{Prop}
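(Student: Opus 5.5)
I will prove the cycle of implications (a)$\Rightarrow$(c)$\Rightarrow$(b)$\Rightarrow$(a), using Kronecker's theorem \cite{K} and the factorization \eqref{e-Kronecker-factorization}, together with the irreducibility of cyclotomic polynomials over $\Q$.

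\emph{(a)$\Rightarrow$(c).} Since $\cal K(0)\ne0$, the factor $z^r$ in \eqref{e-Kronecker-factorization} is trivial, so $\cal K(z)=\prod_{j\in K}\Phi_j(z)^{b_j}$. I take $N:=\mathrm{lcm}(K)$, replacing it by $2N$ if $N=1$ so that $N>1$, and $s:=\max_j b_j$ (or $s=1$ if $K=\emptyset$). Each $\Phi_j$ with $j\in K$ divides $z^N-1$, because $z^N-1=\prod_{d\mid N}\Phi_d(z)$. The $\Phi_j$ for distinct $j$ are pairwise coprime, having disjoint root sets, so $\cal K(z)$ divides $(z^N-1)^s$.

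\emph{(c)$\Rightarrow$(b).} I take $t:=N+1>1$. Every root $\zeta$ of $\cal K$ is a root of $(z^N-1)^s$, so $\zeta^N=1$ and hence $\zeta^t=\zeta$. The point to check is multiplicities. The roots of $\cal K(z^t)$ are the $w$ with $w^t$ a root of $\cal K$, and $\cal K(z^t)=\prod_{\eta}(z^t-\eta)^{{\rm m}(\eta)}$, the product running over the roots $\eta$ of $\cal K$ with multiplicities ${\rm m}(\eta)$. For a root $\zeta$ of $\cal K$, the factor $z^t-\zeta$ vanishes at $\zeta$ because $\zeta^t=\zeta$. Its derivative there is $t\zeta^{t-1}\ne0$, since $\zeta\ne0$, so $\zeta$ is a simple root of $z^t-\zeta$. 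Hence $\zeta$ has multiplicity at least ${\rm m}(\zeta)$ in $\cal K(z^t)$. Applying this for every root of $\cal K$ shows that $\cal K(z)$ divides $\cal K(z^t)$. Alternatively one can argue with the $\Phi_j$ factors directly.

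\emph{(b)$\Rightarrow$(a).} This is the main step, and the approach is to iterate the map $\zeta\mapsto\zeta^t$. If $\cal K(z)\mid\cal K(z^t)$ and $\zeta$ is a root of $\cal K$, then $\cal K(\zeta^t)=0$. Iterating, $\zeta^{t^k}$ is a root of $\cal K$ for all $k\ge0$. These values lie in the finite root set of $\cal K$, so $\zeta^{t^k}=\zeta^{t^l}$ for some $k<l$. Since $\zeta\ne0$, this gives $\zeta^{t^k(t^{l-k}-1)}=1$, and because $t>1$ the exponent is positive. Thus every root of $\cal K$ is a root of unity and lies in the closed unit disk, so $\cal K$ is Kronecker.

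The main obstacle is not conceptual. It is the multiplicity bookkeeping in (c)$\Rightarrow$(b), which the simple-root argument above handles, and ensuring $N>1$ in (a)$\Rightarrow$(c).
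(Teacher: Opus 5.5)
Your proof is correct and uses the same ingredients as the paper's: the cyclotomic factorization for (a)$\Rightarrow$(c), an exponent $t\equiv 1\pmod N$ so that $\zeta^t=\zeta$ on the roots, and the orbit $\zeta,\zeta^t,\zeta^{t^2},\dots$ for (b)$\Rightarrow$(a). You chain them as a cycle, getting (b) from (c) with one uniform multiplicity count, whereas the paper proves (a)$\Rightarrow$(b) directly and splits off the repeated roots separately. Your explicit choices $s=\max_j b_j$ and $N>1$ also cover the edge case $\mathcal{K}(z)=z-1$: there the paper's choice $N=\operatorname{lcm}\{jb_j\}$ gives $N=1$, which (c) does not allow.
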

\begin{Rem}
  		Ayache, Echi, and Naimi introduced in  \cite {AEN}  the more restrictive class of absolutely Kronecker polynomials characterized by the condition that
  		$\P(z)$ divides $\P(z^t)$
  			in $\C[z]$
  		for every $t\in\N$.\end{Rem}
  	\smallskip

  Here we can	combine Proposition \ref{P-Kronecker-divisibility} with  the  divisibility criteria given by  Corollary \ref{C-divis-cond} to obtain the following result, which we prove in \S \ref{Ths}.

     \begin{Thm}\label{T-kronekerequiv-alt}
     	Let
     	$\dsize 
     	\cal K(z)=\sum_{j=0}^n a_jz^{n-j}
     	$
     	be a monic polynomial of degree $n\in\N$ with
     	integer coefficients and $a_n=\mathcal K(0)\ne0$.
     	 Define
     	$$
     	F_0:=1\quad \hbox{and}\quad 
     	F_k:=\sum_{s=1}^k(-1)^s\hskip - 4mm
     	\sum_{\substack{j_1+\cdots+j_s=k\\j_i\in\N}}
     	a_{j_1}\cdots a_{j_s} 
     	\quad \hbox{when}\ k\in\N.
     	$$
     	The following properties are equivalent:
     	\begin{enumerate}\item[(a)] $\cal K(z)$ is a Kronecker polynomial.
     	\item[(b)]  There exists an integer  $t>1$ such that \begin{equation}\label{e-sum-K}\sum_{j=0}^{[k/t]}
     		a_j\H_{k-tj}(X_n)=0,
     		\qquad (t-1)n<k\le tn,\end{equation} 
     		where  $X_n$ is the roots multiset of $\cal K(z)$.
     		\item[(c)] There exists an integer $t>1$ such that
     		\begin{equation}\label{e-Fk=0}
     			\sum_{j=0}^{[k/t]}
     			a_jF_{k-tj}=0,
     			\ \hbox{when}\ (t-1)n<k\le tn.
     		\end{equation}
      \end{enumerate}
     \end{Thm}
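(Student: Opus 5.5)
The plan is to combine Proposition \ref{P-Kronecker-divisibility} (a)$\Leftrightarrow$(b) with Theorem \ref{T-Viete-3} applied to the polynomial $\P(z):=\cal K(z^t)$. Write
$$\cal K(z^t)=\sum_{j=0}^n a_j z^{tn-tj},$$
a polynomial of degree $tn$. Its coefficients in the notation of Theorem \ref{T-Viete-3} are $\tilde a_i$, with $\tilde a_{tj}=a_j$ and $\tilde a_i=0$ when $t\nmid i$. Take $J_n:=X_n$, the roots multiset of $\cal K$, so that $\q_n=\cal K$ since $\cal K$ is monic. The quantities of Theorem \ref{T-Viete-3} then become
$$c_k=\sum_{i=0}^k\tilde a_i\H_{k-i}(X_n)=\sum_{j=0}^{[k/t]}a_j\H_{k-tj}(X_n).$$
Condition (c) of that theorem, with $m=tn$, reads $c_k=0$ for $tn-n<k\le tn$, which is exactly \eqref{e-sum-K}. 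Hence, for a fixed $t>1$, \eqref{e-sum-K} holds if and only if $X_n\sqsubseteq$ roots multiset of $\cal K(z^t)$, that is, if and only if $\cal K(z)\mid\cal K(z^t)$. Quantifying over $t>1$ and invoking Proposition \ref{P-Kronecker-divisibility} gives (a)$\Leftrightarrow$(b). The hypothesis $\cal K(0)\ne0$ is used only to apply that proposition.

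For (b)$\Leftrightarrow$(c), I will prove $F_k=\H_k(X_n)$ for all $k\ge0$; then \eqref{e-Fk=0} and \eqref{e-sum-K} are literally the same equations. By Vieta, $a_j=(-1)^j\E_j(X_n)$. Identity \eqref{e-wellknownidentity} therefore reads $\sum_{j=0}^k a_j\H_{k-j}=0$ for $k\ge1$, with $a_j:=0$ for $j>n$. Equivalently, in formal power series,
$$\Big(\sum_j a_jx^j\Big)\Big(\sum_k\H_kx^k\Big)=1,$$
so $\sum_k\H_kx^k=(1+A(x))^{-1}$ where $A(x)=\sum_{j\ge1}a_jx^j$. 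Expanding $(1+A)^{-1}=\sum_{s\ge0}(-1)^sA^s$ and extracting the coefficient of $x^k$ gives exactly
$$\sum_{s=1}^k(-1)^s\sum_{j_1+\cdots+j_s=k,\ j_i\ge1}a_{j_1}\cdots a_{j_s}=F_k,$$
since $s\le k$ is forced by $j_i\ge1$. Thus $F_k=\H_k(X_n)$. Here $a_j$ with $j>n$ must be read as $0$ in the definition of $F_k$; I will state this convention explicitly. Alternatively, one could cite Corollary \ref{C-divis-cond}, whose first expression is the same expansion.

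The only delicate point is the index bookkeeping in the first step. I must check that the coefficients of $\cal K(z^t)$ are indexed from the top degree $tn$ in the manner Theorem \ref{T-Viete-3} requires, and that $[k/t]\le n$ for $k\le tn$, so that no out-of-range $a_j$ appear. The rest is direct substitution.
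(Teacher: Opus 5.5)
Your proposal is correct and follows the paper's proof. You get (a)$\Leftrightarrow$(b) by combining Proposition \ref{P-Kronecker-divisibility} with Theorem \ref{T-Viete-3} applied to $\cal K(z^t)$ and $J_n=X_n$, and (b)$\Leftrightarrow$(c) through $F_k=\H_k(X_n)$; you derive that identity directly from $H(x;X_n)=(1+A(x))^{-1}$ rather than citing Proposition \ref{P-H-E}, whose proof uses the same geometric-series expansion, and your explicit convention $a_j:=0$ for $j>n$, needed to define $F_k$ for $k>n$, is left implicit in the paper.
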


  \subsection{Applications to polynomials with restricted coefficients}
  
 Given a polynomial $\P(z)$ with integer coefficients and a finite
 set $D\subset\mathbb Z$, a question to ask is whether $\P(z)$
 divides a nonzero polynomial whose coefficients
 belong to $D$. Here are some cases of particular interest:
 
 \begin{itemize}
 	\item  
 	$D=\{0,1\}$ and constant term $1$ (\emph{Newman polynomials})
 	\item  $D=\{-1,1\}$ (\emph{Littlewood polynomials})
 	\item 
 	$D=\{-1,0,1\}$ and nonzero constant term (\emph{Borwein polynomials})
 \end{itemize}
 
  A major issue is to determine whether such
  multiples exist and, when they do, to find their
  smallest possible degree. These questions have
  been studied in connection with polynomials of
  small height, Mahler measure, Pisot and Salem
  numbers, and finite-state algorithms; see  
  \cite{Borwein},  and also \cite{Mossinghoff, DJS},  the recent \cite{MI} and the references therein.
  
   \medskip 
 By applying Corollary \ref{C-divis-cond} we easily obtain the next divisibility criterion which yields explicit
  conditions for the existence of multiples of
  $\P(z)$ with
  coefficients in a prescribed finite set of integers. Indeed, 
 after  recalling the
  	definition of $F_j$ given in Theorem \ref{T-kronekerequiv-alt}, it suffices to observe that the divisibility of
  	$\P(z)$ by 	the polynomial $\dsize
  	\sum_{j=0}^N b_jz^{N-j}$,
  	with $b_0:=1$,
  	is equivalent to
  	\eqref{e-NeS-coeff} below. Thus, we get immediately the following criterion.

  \begin{Thm}\label{C-prescribed-coeff}
  	Let
  	$
  	\P(z)=\sum_{j=0}^m a_jz^{m-j}
  	$
  	be a monic polynomial of degree
  	$m\in\N$, with integer coefficients and roots  multiset $X_m$.

  	The polynomial $\P(z)$ has a 
  	multiple of degree $N\ge m$, with coefficients in a finite set $D\subset\mathbb Z$,
  	if and only if there exist
  	$b_1,\ldots,b_N\in D$ such that
  	\begin{equation}\label{e-NeS-coeff}
  		\sum_{j=0}^k b_{k-j}\H_j(X_m)=0,
  			\ \hbox{when}\ N-m<k\le N,
  	\end{equation}
  	where we have set $b_0:=1$.
  \end{Thm}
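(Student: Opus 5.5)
The plan is to apply Theorem \ref{T-Viete-3} with the roles reversed. The candidate multiple plays the role of the large polynomial, and $\P(z)$ plays the role of the divisor $\q_n(z)$. Fix $N\ge m$ and $b_1,\ldots,b_N\in D$, set $b_0:=1$, and let
$$
\cal B(z):=\sum_{j=0}^N b_jz^{N-j},
$$
a monic polynomial of degree $N$ with roots multiset $Y_N$. Since $\P(z)$ is monic, it is exactly the monic polynomial $\prod_{w\in X_m}(z-w)$ whose roots multiset is $X_m$, with $|X_m|=m\le N$. Theorem \ref{T-Viete-3} therefore applies with $\cal B$ in place of $\P$, $N$ in place of $m$, and $J_n=X_m$, $n=m$.

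In this setting the quantities $c_k$ of the theorem are
$$
c_k=\sum_{j=0}^k b_j\H_{k-j}(X_m)=\sum_{j=0}^k b_{k-j}\H_j(X_m),
$$
after reindexing $j\mapsto k-j$. The equivalence (b)$\Leftrightarrow$(c) of the theorem then reads: $\P(z)$ divides $\cal B(z)$ if and only if $c_k=0$ for $N-m<k\le N$. This is exactly \eqref{e-NeS-coeff}.

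It remains to match this with the statement.
\begin{itemize}
\item If $\P$ has a multiple of degree $N$ with coefficients in $D$, I normalise it to the form $\cal B(z)$ above with $b_0=1$. This normalisation is the only delicate point. Because $\P$ is monic with integer coefficients, the quotient has integer coefficients, so the leading coefficient of the multiple is a nonzero element of $D$. I will read the statement, as in the paper's preceding remark, as concerning multiples $\sum_{j=0}^N b_jz^{N-j}$ with $b_0:=1$. For the classes of interest (Newman, Littlewood, Borwein) the leading coefficient is $\pm1$, so dividing by it when it is $-1$ keeps the coefficients in $D$ whenever $D=-D$.
\item Conversely, given $b_j\in D$ satisfying \eqref{e-NeS-coeff}, the theorem gives $\P\mid\cal B$, and $\cal B$ is the required multiple of degree $N$.
\end{itemize}

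I expect no real obstacle. The only things to verify are the index bookkeeping in the reindexing of $c_k$ and the normalisation $b_0=1$, which the statement builds in by convention.
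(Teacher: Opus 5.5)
Your proposal is correct and follows the paper's argument. The paper also obtains \eqref{e-NeS-coeff} from the divisibility criterion (Corollary \ref{C-divis-cond}, which is the equivalence (b)$\Leftrightarrow$(c) of Theorem \ref{T-Viete-3}), taking the monic candidate $\sum_{j=0}^N b_jz^{N-j}$ with $b_0=1$ as dividend and $\P(z)$, with $J_n=X_m$, as divisor, exactly as you do.

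Your reading of the statement as concerning monic multiples is the one the paper builds in through $b_0:=1$. Read literally, the statement can fail: with $D=\{0,2\}$, $\P(z)=z+1$ divides $2z+2$ but has no monic multiple whose lower coefficients lie in $D$. One small correction: a degree-$N$ multiple has its leading coefficient in $D\setminus\{0\}$ directly by hypothesis, not because the quotient has integer coefficients.
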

  In view of Proposition \ref{P-H-E}, all the equations \eqref{e-NeS-coeff} can be written  in  terms of the coefficients of $\P(z)$.

  By taking $D=\{0,1\}$, $\{-1,1\}$, or
  $\{-1,0,1\}$ in the previous theorem, we obtain a
  characterization of the existence of Newman,
  Littlewood, or Borwein multiples,
  respectively, with the additional requirement
  $b_N=1$ in the Newman case and $b_N\ne0$
  in the Borwein one.

   Plainly, such a characterization consists of a finite system of linear equations with solutions in a preassigned set $D\subset\Z$. Consequently, the
   minimum-degree problem can be formulated
   as finding the smallest $N\ge m$ for which
   \eqref{e-NeS-coeff} admits solutions in $D$.
   This formulation might help in
   computational investigations of Newman,
   Littlewood, and Borwein multiples.
  
  \smallskip

  	It is proved in \cite[Proposition~13]{DJS} that every product of cyclotomic polynomials has a Littlewood multiple.  Since a Littlewood polynomial is a Borwein polynomial, the existence of a Borwein multiple of a product of cyclotomic polynomials is not new. However, here we present the following property 
 
   \begin{Prop}\label{generalpropertyBorwein}
   	If 
   	$  \dsize
   	\P(z)=\prod_{\ell=1}^k\Phi_{j_\ell}(z)^{d_\ell},
   	$ 
   	where the indices $j_\ell$ are distinct and $d_\ell\in\N$,
   	then $\P(z)$ divides  a Borwein
   	polynomial of the form 
   	$$
   	\cal B(z):=\prod_{s=1}^n(z^{N_s}-1),
   	$$
   	where 
   	$n:=\max_{1\le \ell\le k}d_\ell$ and
   	$N_1,\ldots,N_n\in\N$ satisfy   {$N_1=\operatorname{lcm}(d_1\cdots d_k)$ if $s=1$ and }
   	 \begin{equation}\label{e-ineq1}
   		N_s>\sum_{i=1}^{s-1}N_i,
   			\ \hbox{when}\ 2\le s\le n.
   	\end{equation}
   \end{Prop}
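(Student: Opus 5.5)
The plan is to prove two independent facts about $\cal B(z)$: that $\P(z)$ divides it, and that it is a Borwein polynomial. The first is pure cyclotomic bookkeeping. The second is a combinatorial statement about expanding the product, and it is where condition \eqref{e-ineq1} is used.

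\emph{Reading of the hypotheses.} I read the condition on $N_1$ as $N_1=\operatorname{lcm}(j_1,\ldots,j_k)$, the lcm of the cyclotomic indices. I also need the remaining $N_s$ to be multiples of $N_1$, and this does not follow from \eqref{e-ineq1} alone. A concrete admissible choice is $N_s:=2^{s-1}N_1$, since then
$$
\sum_{i=1}^{s-1}N_i=(2^{s-1}-1)N_1<N_s .
$$
I would state this reading explicitly at the start of the proof.

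\emph{Divisibility.} Since $z^N-1=\prod_{d\mid N}\Phi_d(z)$ and each $j_\ell$ divides $N_1$, which divides every $N_s$, every $\Phi_{j_\ell}(z)$ divides every factor $z^{N_s}-1$. Hence $\Phi_{j_\ell}(z)^n$ divides $\cal B(z)$. Because $d_\ell\le n$, also $\Phi_{j_\ell}(z)^{d_\ell}$ divides $\cal B(z)$. The polynomials $\Phi_{j_\ell}$ with distinct indices are pairwise coprime, since they are irreducible over $\Q$ and have disjoint root sets. Therefore their product $\P(z)$ divides $\cal B(z)$. Equivalently, in the multiset language of Theorem~\ref{T-Viete-3}, the roots multiset of $\P$ is a submultiset of that of $\cal B$: each primitive $j_\ell$-th root of unity occurs with multiplicity $d_\ell$ in the former and $n$ in the latter.

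\emph{Borwein property.} Expanding the product gives
$$
\cal B(z)=\sum_{S\subseteq\{1,\ldots,n\}}(-1)^{n-|S|}z^{\sum_{s\in S}N_s}.
$$
The key step, and the only real obstacle, is to show that the exponents $\sum_{s\in S}N_s$ are pairwise distinct, so that no two monomials merge. The superincreasing condition \eqref{e-ineq1} gives this directly. Suppose $S\ne S'$ and let $s_0$ be the largest index in their symmetric difference, say $s_0\in S\setminus S'$. Then
$$
\sum_{s\in S}N_s-\sum_{s\in S'}N_s\ \ge\ N_{s_0}-\sum_{i<s_0}N_i\ >\ 0 .
$$
Consequently every coefficient of $\cal B(z)$ lies in $\{-1,0,1\}$. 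The constant term is the term with $S=\emptyset$, namely $(-1)^n\neq0$. Hence $\cal B(z)$ is a Borwein polynomial, which completes the proof.
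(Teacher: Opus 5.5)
Your proof is correct and follows essentially the same route as the paper. The Borwein property comes from expanding over subsets $S$ and using \eqref{e-ineq1} to separate the exponents; your symmetric-difference argument in fact supplies the justification that the paper only asserts. Your reading $N_1=\operatorname{lcm}(j_1,\ldots,j_k)$, together with an extra divisibility requirement on the $N_s$, is exactly what the paper's proof uses, since \eqref{e-ineq1} alone does not give divisibility.

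The only difference is in the divisibility step. The paper factors $\P=\P_1\cdots\P_n$ with $\P_s=\prod_{\ell:\,d_\ell\ge s}\Phi_{j_\ell}$ and requires $N_s$ to be a multiple of $L_s=\operatorname{lcm}\{j_\ell:\,d_\ell\ge s\}$ only. This can allow smaller $N_s$, and hence a lower-degree multiple, than your uniform condition $N_1\mid N_s$. Your choice avoids that layered bookkeeping by using pairwise coprimality instead.
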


   	This proposition and Proposition \ref{P-Kronecker-divisibility} 
   	immediately yield the following
   	
  \begin{Cor}\label{T-Kron-Bor}
   {Every Kronecker polynomial $\cal K(z)$, with $\cal K(0)\ne 0$, has a Borwein multiple}
  \end{Cor}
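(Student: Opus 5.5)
The plan is to reduce Corollary \ref{T-Kron-Bor} to a product of cyclotomic polynomials and then invoke Proposition \ref{generalpropertyBorwein}. Since $\cal K(z)$ is a Kronecker polynomial with $\cal K(0)\ne0$, the factorization \eqref{e-Kronecker-factorization} has $r=0$. Hence
$$
\cal K(z)=\prod_{\ell=1}^k\Phi_{j_\ell}(z)^{d_\ell},
$$
with distinct indices $j_\ell$ and $d_\ell\in\N$. One may equally start from Proposition \ref{P-Kronecker-divisibility}(c): $\cal K(z)$ divides $(z^N-1)^s$, and every irreducible factor of $z^N-1$ over $\Q$ is a cyclotomic polynomial $\Phi_d(z)$ with $d\mid N$. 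Because $\cal K(z)$ is monic with integer coefficients, Gauss's lemma then gives $\cal K(z)$ as such a product.

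Next, I apply Proposition \ref{generalpropertyBorwein} to this product. It yields that $\cal K(z)$ divides
$$
\cal B(z)=\prod_{s=1}^n(z^{N_s}-1),
$$
where $n=\max_\ell d_\ell$ and the $N_s$ satisfy \eqref{e-ineq1}. For instance, I take $N_s=2^{s-1}M$, with $M$ a common multiple of the $j_\ell$, so that $N_s>\sum_{i<s}N_i$. The proposition also asserts that $\cal B(z)$ is Borwein. Nevertheless, I would recall the reason, because it is the only substantive point. Expanding the product gives
$$
\cal B(z)=\sum_{S\subseteq\{1,\ldots,n\}}(-1)^{n-|S|}z^{\sum_{i\in S}N_i}.
$$
Condition \eqref{e-ineq1} makes the map $S\mapsto\sum_{i\in S}N_i$ injective, by a lexicographic argument on the largest index. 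So no two monomials coincide, and all coefficients lie in $\{-1,0,1\}$. The constant term is $(-1)^n\ne0$.

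The main obstacle is bookkeeping rather than mathematics. One must check that $N_1$ is chosen so that every $\Phi_{j_\ell}$ divides $z^{N_1}-1$ (hence each $z^{N_s}-1$), and that $n$ copies suffice to cover the multiplicities $d_\ell\le n$. Both points are already built into Proposition \ref{generalpropertyBorwein}, so the corollary follows immediately.
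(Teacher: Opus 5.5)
Your proposal is correct and follows the paper's argument. Since $\cal K(0)\ne0$, the factorization \eqref{e-Kronecker-factorization} (or Proposition \ref{P-Kronecker-divisibility}) writes $\cal K(z)$ as a product of cyclotomic polynomials, and Proposition \ref{generalpropertyBorwein} then supplies the Borwein multiple $\prod_{s=1}^n(z^{N_s}-1)$. Your explicit choice $N_s=2^{s-1}M$, with $M$ a common multiple of the $j_\ell$, is a good touch: it makes each $\Phi_{j_\ell}(z)$ divide every $z^{N_s}-1$, which the paper's proof enforces but the displayed statement of the proposition does not spell out.
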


The proofs of the aforementioned results are given in \S \ref{Ths}. For such proofs we need some old and new properties of the symmetric polynomials $\H_m$ and $\E_m$ which are collected  in the next section.

 \section{Old and new identities for symmetric polynomials}

 \subsection{Generating functions}
 
 For $\vec z=(z_1,\ldots,z_n)\in\C^n$, the generating functions
 of  $\H_m$ and  $\E_m$ 
 are
 \begin{equation}\label{genH}
 	H(t;\vec z\, )
 	:=\prod_{j=1}^{n}(1-tz_j)^{-1}
 	=\sum_{m=0}^{\infty}\H_m(\vec z\, )t^m
 \end{equation}
 and
 \begin{equation}\label{genE}
 	E(t;\vec z\, )
 	:=\prod_{j=1}^{n}(1+tz_j)
 	=\sum_{m=0}^{n}\E_m(\vec z\, )t^m,
 \end{equation}
respectively. We regard  \eqref{genH} as an  identity of formal power series.
 
 As before, if  $Y_n$ is the multiset of the components of $\vec y$, then
 we write
 $H(t;Y_n)=H(t;\vec y\, )$ and $E(t;Y_n)=E(t;\vec y\, )$.

In view of \eqref{genH} and \eqref{genE}, one has
 \begin{equation}\label{e-gen-funct1}
 	\H_m(\vec z\, )
 	=\frac{1}{m!}
 	\frac{{\rm d}^m}{{\rm d}t^m}H(t;\vec z\, )_{\vert_{t=0}},
 	\qquad
 	\E_m(\vec z\, )
 	=\frac{1}{m!}
 	\frac{{\rm d}^m}{{\rm d}t^m}E(t;\vec z\, )_{\vert_{t=0}},
 \end{equation}
 for every $m\ge0$.
 Though the first formula  differentiation is understood formally,  the usual product
 and chain rules remain valid \cite{St}.
 Moreover,
 \begin{equation}\label{e-prod-EH}
 	E(-t;\vec z\, )H(t;\vec z\, )
 	=
 	H(-t;\vec z\, )E(t;\vec z\, )=1.
 \end{equation}
 
  The properties of  the generating functions given in Propositions \ref{L-pol-E} and \ref{L-generating-products} below are well known,  but we  provide their short proofs for convenience.
 The first property relates these generating functions to a polynomial
 with prescribed roots multiset.
 
 \begin{Prop}\label{L-pol-E}
 	Let $\P(z)=\sum_{j=0}^{m}a_jz^{m-j}$, with $a_0\ne0$,
 	be a complex polynomial with roots multiset $X_m$. Then
 	$$
 	t^m\P(t^{-1})=a_0E(-t;X_m),
 	$$
 	and consequently
 	$$
 	t^m\P(t^{-1})H(t;X_m)=a_0.
 	$$
 \end{Prop}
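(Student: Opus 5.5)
Write $X_m=\{x_0,\ldots,x_{m-1}\}$ and use the factorization $\P(z)=a_0\prod_{j=0}^{m-1}(z-x_j)$, which holds because $a_0\ne0$ and $X_m$ is the roots multiset. The plan is to substitute $z=t^{-1}$, multiply by $t^m$, compare the result factor by factor with the generating function \eqref{genE}, and then invoke \eqref{e-prod-EH} for the second claim.

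For the first identity, distribute one factor of $t$ into each of the $m$ linear factors:
$$
t^m\P(t^{-1})=a_0\prod_{j=0}^{m-1}t\,(t^{-1}-x_j)=a_0\prod_{j=0}^{m-1}(1-tx_j).
$$
By \eqref{genE} with $t$ replaced by $-t$, the right-hand side is exactly $a_0E(-t;X_m)$. As a cross-check one can instead argue from the coefficients: $t^m\P(t^{-1})=\sum_{k=0}^m a_kt^k$, and by the Vieta formulas \eqref{e-classic-Viete} this equals $a_0\sum_k(-1)^k\E_k(X_m)t^k=a_0E(-t;X_m)$.

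For the second identity, multiply both sides by $H(t;X_m)$ and apply \eqref{e-prod-EH}, which gives $E(-t;X_m)H(t;X_m)=1$. The result is $t^m\P(t^{-1})H(t;X_m)=a_0$.

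The only point that needs care is the meaning of these identities. The product $t^m\P(t^{-1})$ is a genuine polynomial in $t$, while $H(t;X_m)$ is a formal power series. So the final identity should be read in $\C[[t]]$, where the product of a polynomial and a power series is well defined and \eqref{e-prod-EH} holds. Zero roots cause no difficulty: each contributes the factor $1$ on both sides, in $E$ and in $H$ alike.
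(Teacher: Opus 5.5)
Your proof is correct and is essentially the paper's own argument. Like the paper, you factor $\P(z)=a_0\prod_{x\in X_m}(z-x)$ and substitute $z=t^{-1}$ and absorb $t^m$ to get $a_0\prod_{x\in X_m}(1-tx)=a_0E(-t;X_m)$, then multiply by $H(t;X_m)$ and apply \eqref{e-prod-EH}; your Vieta cross-check and the remark about reading the identity in $\C[[t]]$ are valid but not needed.
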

 
 \begin{proof}
 	The factorization
 	$\P(z)=a_0\prod_{x\in X_m}(z-x)$ yields
 	$$
 	t^m\P(t^{-1})
 	=a_0\prod_{x\in X_m}(1-tx)
 	=a_0E(-t;X_m).
 	$$
 	The second identity follows from \eqref{e-prod-EH}.
 \end{proof}
 
 The next proposition extends  the identity \eqref{e-prod-EH}.   
 
 \begin{Prop}\label{L-generating-products}
 	Let $\vec x=(x_0,\ldots, x_{k-1})\in\C^k$, $\vec y=(y_0,\ldots, y_{n-1})\in\C^n$, and denote
 	$(\vec x, \vec y\, )=(x_0,\ldots, x_{k-1}, y_0,\ldots, y_{n-1})$.
 	The following identities of formal power series hold:
 	$$
 	H(t;\vec x\, )=E(-t;\vec y\, )H(t;\vec x,\vec y\, ),
 	\qquad
 	E(t;\vec x\, )=H(-t;\vec y\, )E(t;\vec x,\vec y\, ).
 	$$
 \end{Prop}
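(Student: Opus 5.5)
The plan is to work directly with the product formulas (\ref{genH}) and (\ref{genE}), viewed as identities of formal power series in $t$. The key observation is that each factor $(1-tz)$ has constant term $1$, so it is invertible in $\C[[t]]$ with inverse $\sum_{m\ge0}z^mt^m$. The expression $H(t;\vec z\,)=\prod_j(1-tz_j)^{-1}$ is then a genuine finite product of invertible series. Its expansion reproduces $\H_m(\vec z\,)$, because choosing an exponent $m_j$ from each geometric series and collecting monomials of total degree $m$ gives every multiset of indices of size $m$ exactly once.

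For the first identity, I would write $H(t;\vec x,\vec y\,)$ as the product over all $k+n$ variables:
$$
H(t;\vec x,\vec y\,)=\prod_{i=0}^{k-1}(1-tx_i)^{-1}\prod_{j=0}^{n-1}(1-ty_j)^{-1}=H(t;\vec x\,)H(t;\vec y\,).
$$
The equality $H(t;\vec x,\vec y\,)=H(t;\vec x\,)H(t;\vec y\,)$ holds because multiplication in $\C[[t]]$ is commutative and associative. Multiplying both sides by $E(-t;\vec y\,)=\prod_j(1-ty_j)$ and using (\ref{e-prod-EH}) for the vector $\vec y$, namely $E(-t;\vec y\,)H(t;\vec y\,)=1$, gives $E(-t;\vec y\,)H(t;\vec x,\vec y\,)=H(t;\vec x\,)$.

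The second identity follows the same way. Here $E(t;\vec x,\vec y\,)=E(t;\vec x\,)E(t;\vec y\,)$ is simply the factorization of a polynomial product. Multiplying by $H(-t;\vec y\,)$ and using $H(-t;\vec y\,)E(t;\vec y\,)=1$ from (\ref{e-prod-EH}) yields $H(-t;\vec y\,)E(t;\vec x,\vec y\,)=E(t;\vec x\,)$.

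The only point needing care is justifying that $H(t;\cdot)$ is multiplicative over concatenation of vectors. This reduces to the legitimacy of (\ref{genH}) as a product identity of formal power series, which the paper already grants, together with the substitution $t\mapsto -t$, a ring automorphism of $\C[[t]]$. I expect no real obstacle. If one preferred coefficientwise identities, comparing coefficients of $t^m$ would give the equivalent statement $\H_m(\vec x\,)=\sum_{j}(-1)^j\E_j(\vec y\,)\H_{m-j}(\vec x,\vec y\,)$, but the generating-function argument makes this unnecessary.
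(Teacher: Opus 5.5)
Your proposal is correct and follows the paper's proof: factor $H(t;\vec x,\vec y\,)=H(t;\vec x\,)H(t;\vec y\,)$ and $E(t;\vec x,\vec y\,)=E(t;\vec x\,)E(t;\vec y\,)$, then multiply by $E(-t;\vec y\,)$ (respectively $H(-t;\vec y\,)$) and apply \eqref{e-prod-EH}. Your remarks on the invertibility of $1-tz$ in $\C[[t]]$ and on the substitution $t\mapsto -t$ only make explicit what the paper takes for granted.
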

 
 \begin{proof}
 Since
 	$$
 	H(t;\vec x,\vec y\, )=H(t;\vec x\, )H(t;\vec y\, ),
 	\qquad
 	E(t;\vec x,\vec y\, )=E(t;\vec x\, )E(t;\vec y\, ),
 	$$
 	the claimed identities follow immediately by
 	multiplying the first equation by $E(-t;\vec y\, )$,
 	the second one by $H(-t;\vec y\, )$, and  using
 	\eqref{e-prod-EH}.
 \end{proof}
 
 \subsection{Convolution identities}

The identities in the next proposition can be found in \cite[Chapter I, §5, Example 25(a), p. 92.]{Mcdonald}.   See also   \cite[Th.\ 1.1]{Merca}.

\begin{Prop}\label{1L-new-id-hm}
	Let $m\in\N_0$ and $\vec v_1\in \C^{d_1},\ldots,\vec v_n\in \C ^{d_n}$. We have that
		$$
			\H_m (\vec v_1,\dots,\vec v_n)=\sum_{m_1+\ldots+m_n=m\atop m_i\ge 0}\H_{m_1}(\vec v_{1})\cdots\H_{m_n}(\vec v_n).
		$$
		$$
			\E_m (\vec v_1,\dots,\vec v_n)=\sum_{m_1+\ldots+m_n=m\atop m_i\ge 0 }\E_{m_1}(\vec v_{1})\cdots\E_{m_n}(\vec v_n).
		$$
\end{Prop}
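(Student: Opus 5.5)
We prove both identities with the generating functions \eqref{genH} and \eqref{genE}, by comparing coefficients of formal power series in $t$. Write $(\vec v_1,\dots,\vec v_n)\in\C^{d_1+\cdots+d_n}$ for the concatenated vector. Its components are exactly the components of the $\vec v_i$, taken together.

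The first step is the factorization of generating functions. By definition, $H(t;\vec z\,)=\prod_j(1-tz_j)^{-1}$ is a product over the components of $\vec z$. Grouping the factors according to which block $\vec v_i$ they come from gives
$$
H(t;\vec v_1,\dots,\vec v_n)=\prod_{i=1}^{n}H(t;\vec v_i),\qquad
E(t;\vec v_1,\dots,\vec v_n)=\prod_{i=1}^{n}E(t;\vec v_i).
$$
This is the same observation used in the proof of Proposition \ref{L-generating-products} for $n=2$, and the general case follows by induction on $n$. For $E$ it is an identity of polynomials. For $H$ it is an identity in $\C[[t]]$, where every factor $(1-tz_j)^{-1}=\sum_{k\ge0}z_j^kt^k$ is a well-defined formal series and multiplication is commutative and associative.

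The second step is to expand the right-hand side. Each factor is $H(t;\vec v_i)=\sum_{m_i\ge0}\H_{m_i}(\vec v_i)t^{m_i}$, and the product of $n$ formal power series is computed by the Cauchy rule. Hence the coefficient of $t^m$ in $\prod_i H(t;\vec v_i)$ is
$$
\sum_{m_1+\cdots+m_n=m,\ m_i\ge0}\H_{m_1}(\vec v_1)\cdots\H_{m_n}(\vec v_n).
$$
This is a finite sum for each $m$. On the left-hand side, the coefficient of $t^m$ is $\H_m(\vec v_1,\dots,\vec v_n)$ by \eqref{genH}. Equating coefficients gives the first identity.

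The argument for $\E_m$ is the same, using \eqref{genE}. The convention that $\E_{m_i}(\vec v_i)=0$ for $m_i>d_i$ is consistent with $E(t;\vec v_i)$ being a polynomial of degree $d_i$, so the unrestricted sum over $m_i\ge0$ is correct. If some $\vec v_i$ is empty, the convention \eqref{convention} makes its generating function equal to $1$, and the identities still hold.

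There is no real obstacle. The only point needing care is that these are identities of formal power series. The Cauchy product rule and the grouping of factors are valid in $\C[[t]]$, so no convergence issues arise.
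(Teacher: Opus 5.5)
Your generating-function proof is correct and complete. The paper gives no proof of its own for this proposition (it only cites Macdonald, Ch.~I, \S5, Ex.~25(a), and Merca). Your argument rests on the same factorization $H(t;\vec x,\vec y\,)=H(t;\vec x\,)H(t;\vec y\,)$ that the paper uses in proving Proposition~\ref{L-generating-products}, so it is the natural self-contained proof within the paper's own framework.
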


In the proof of Corollary \ref{C-divis-cond} we apply the following identities for $\H_m$ and $\E_m$. 
They follow easily by comparing coefficients of $t^m$ in the formal power
series after 
using \eqref{genH}, \eqref{genE}, and Proposition \ref{L-generating-products}.

\begin{Prop}\label{P-Main}  Let $m \in\N_0$. For all
	$\vec x\in\C^k$, $\vec y\in\C^n$ we have that
	$$
		\H_m(\vec x )=\sum_{j=0}^m (-1)^{j}\E_{j}(\vec y\,)\H_{m-j}(\vec x,\, \vec y\,),
	$$
$$
		\E_m(\vec x  )=\sum_{j=0}^m (-1)^{m-j}\H_{m-j}(\vec y\,)\E_{j}(\vec x,\, \vec y\,).
	$$
\end{Prop}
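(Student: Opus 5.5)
The plan is to derive both identities directly from Proposition \ref{L-generating-products} by expanding each side as a formal power series in $t$ and comparing the coefficients of $t^m$.

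For the first identity, start from
$$
H(t;\vec x\,)=E(-t;\vec y\,)H(t;\vec x,\vec y\,).
$$
By \eqref{genE},
$$
E(-t;\vec y\,)=\sum_{j=0}^{n}(-1)^j\E_j(\vec y\,)t^j,
$$
and by \eqref{genH},
$$
H(t;\vec x,\vec y\,)=\sum_{i\ge0}\H_i(\vec x,\vec y\,)t^i.
$$
Form the Cauchy product of these two series. Because $\E_j(\vec y\,)=0$ for $j>n$, the coefficient of $t^m$ is
$$
\sum_{j=0}^{m}(-1)^j\E_j(\vec y\,)\H_{m-j}(\vec x,\vec y\,).
$$
This sum is finite and extends to $m$ even when $m>n$, since the extra terms are zero. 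On the left-hand side the coefficient of $t^m$ is $\H_m(\vec x\,)$. Equating the two coefficients gives the first identity.

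For the second identity, use
$$
E(t;\vec x\,)=H(-t;\vec y\,)E(t;\vec x,\vec y\,).
$$
Here
$$
H(-t;\vec y\,)=\sum_{i\ge0}(-1)^i\H_i(\vec y\,)t^i,
\qquad
E(t;\vec x,\vec y\,)=\sum_{j=0}^{k+n}\E_j(\vec x,\vec y\,)t^j.
$$
The coefficient of $t^m$ in the product is
$$
\sum_{j=0}^{m}(-1)^{m-j}\H_{m-j}(\vec y\,)\E_j(\vec x,\vec y\,),
$$
with the convention $\E_j=0$ for $j>k+n$. On the left it is $\E_m(\vec x\,)$, which vanishes when $m>k$. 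Equating coefficients gives the second identity.

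No step here is a real obstacle. The only point needing care is that the products are legitimate identities of formal power series, and this is exactly what Proposition \ref{L-generating-products} provides. Multiplying a polynomial by a formal power series is well defined, and coefficients are read off uniquely. The degenerate cases, $k=0$ or $n=0$ (empty vectors), are handled by the convention \eqref{convention}.
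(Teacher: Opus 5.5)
Your proposal is correct and follows the paper's own route: the paper likewise obtains both identities by comparing the coefficients of $t^m$ in the two identities of Proposition \ref{L-generating-products}, after expanding each factor via \eqref{genH} and \eqref{genE}. Your handling of the vanishing terms ($\E_j=0$ beyond the number of variables) and of the empty-vector cases is accurate.
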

Both identities in Proposition~\ref{P-Main} are special cases of the classical
formulas for the symmetric functions of a \emph{difference of alphabets}
(finite multisets of variables).
Writing $h_m(X)=\H_m(\vec x\,)$ and $e_m(X)=\E_m(\vec x\,)$ for the alphabet
$X=\{x_1,\dots,x_k\}$, these formulas read
$$
h_m(X-Y)\!=\!\sum_{j=0}^m(-1)^j e_j(Y) h_{m-j}(X),
$$
$$
e_m(X-Y)\!=\!\sum_{j=0}^m(-1)^{m-j} h_{m-j}(Y) e_j(X) 
$$
and Proposition~\ref{P-Main} follows by replacing $X$ with the alphabet
$X\sqcup Y=\{x_1,\dots,x_k,y_1,\dots,y_n\}$. For more details, see e.g. \ \cite{Las, Z}.

\medskip
Note  also that Proposition \ref{P-Main}  can  be equivalently   reformulated in terms of multisets as follows. 
Let $J\sqsubseteq Y$, with $1\le |J|\le |Y|$, and let
$\J$ be the complement multiset of $J$ with respect to $Y$. 
For every $m \in\N_0$ one has
$$
	\H_m(J)=  \sum_{j=0}^{m}(-1)^j\E_j( \J )\H_{m-j}(Y),
$$
$$
	\E_m(J)=  \sum_{j=0}^{m} (-1)^ {m-j}\H_{m-j}( \J)\E_j(Y).
$$
Applying the latter formula with $J$ and its complement
$\J$ interchanged gives
$$
\E_m(\J)
=
\sum_{j=0}^{m}(-1)^j\H_j(J)\E_{m-j}(Y),
$$
yielding in particular 
$\dsize
\sum_{j=0}^{m}(-1)^j\H_j(J)\E_{m-j}(Y)=0$
if $m>|Y|-|J|$. 

This reduces to the standard identity
\eqref{e-wellknownidentity} when $J=Y$.

\subsection{Expressing $\H_m$ in terms of $\E_k$}

In the proof of Corollary \ref{C-divis-cond}
we apply the following property, which is a special case of a   theorem  by  Gauss (see \cite[\S 2.2.15]{B}).

\begin{Prop}\label{P-H-E}
	For every $m\in\N $ and $\vec z\in\C^n$ one has
	\begin{align}\label{e-f1}
		\H_m(\vec z\, ) &= 
		\sum_{k=1}^m (-1)^{m-k}\hskip-3mm\sum_{\substack{m_1+\cdots+m_k=m\\ 1\le m_i\le n}}
		\E_{m_1}(\vec z\,)\cdots\E_{m_k}(\vec z\,) 
		\\ \label{e-f2}
		&=\sum_{\substack{\vec\alpha\in\mathbb N_0^n\\\w(\vec\alpha)=m}}
		(-1)^{m-\Sigma(\vec\alpha)}
		\binom{\Sigma(\vec\alpha)}{\alpha_1,\ldots,\alpha_n}
		\prod_{j=1}^{n}\E_j(\vec z\, )^{\alpha_j},
	\end{align}
	where
	$$
	\Sigma(\vec\alpha):=\sum_{j=1}^{n}\alpha_j,
	\qquad
	\w(\vec\alpha):=\sum_{j=1}^{n}j\alpha_j.
	$$
\end{Prop}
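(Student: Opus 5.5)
We prove both expressions in Proposition~\ref{P-H-E} with the generating functions \eqref{genH}, \eqref{genE} and the identity \eqref{e-prod-EH}. Put $e_j:=\E_j(\vec z\,)$ for $1\le j\le n$, and set
$$
u(t):=\sum_{j=1}^n(-1)^{j-1}e_j t^j .
$$
Then $E(-t;\vec z\,)=1-u(t)$. By \eqref{e-prod-EH},
$$
H(t;\vec z\,)=\frac1{1-u(t)}=\sum_{k\ge0}u(t)^k
$$
as formal power series. This expansion is legitimate because $u(t)$ has zero constant term, so only the terms with $k\le m$ contribute to the coefficient of $t^m$.

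For \eqref{e-f1}, expand $u(t)^k$ as a sum over ordered $k$-tuples $(m_1,\dots,m_k)$ with $1\le m_i\le n$. The tuple contributes $\prod_i(-1)^{m_i-1}e_{m_i}\,t^{m_1+\cdots+m_k}$. For the coefficient of $t^m$ we need $m_1+\cdots+m_k=m$, so the sign is $(-1)^{m-k}$. Summing over $k$ from $1$ to $m$ gives \eqref{e-f1}; the case $k=0$ drops out since $m\ge1$.

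For \eqref{e-f2}, group the ordered tuples in \eqref{e-f1} by their content. Let $\alpha_j$ be the number of indices $i$ with $m_i=j$. Then $k=\Sigma(\vec\alpha)$ and $m=\w(\vec\alpha)$. The number of ordered tuples with a given $\vec\alpha$ is the multinomial coefficient $\binom{\Sigma(\vec\alpha)}{\alpha_1,\ldots,\alpha_n}$. The product $e_{m_1}\cdots e_{m_k}$ equals $\prod_j e_j^{\alpha_j}$. So \eqref{e-f2} is just a regrouping of the sum in \eqref{e-f1}. Alternatively, one can apply the multinomial theorem directly to $\bigl(\sum_j(-1)^{j-1}e_jt^j\bigr)^k$. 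In that form the sign is $(-1)^{\sum_j (j-1)\alpha_j}=(-1)^{m-\Sigma(\vec\alpha)}$.

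The only point needing care is the sign bookkeeping: checking that $(-1)^{\sum(m_i-1)}=(-1)^{m-k}$. One should also note that the restriction $m_i\le n$ comes from $e_j=0$ for $j>n$. Everything else is routine coefficient extraction.
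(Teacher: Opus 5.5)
Your proof is correct and follows the paper's route: you write $H(t;\vec z\,)=1/(1-u(t))$ with $u$ equal to the paper's $F_n(t;\vec z\,)$, expand geometrically, read off the coefficient of $t^m$ to get \eqref{e-f1}, and obtain \eqref{e-f2} from the multinomial theorem. The only difference is cosmetic: the paper extracts coefficients through $m$th derivatives at $t=0$ and the generalized Leibniz formula, whereas you expand $u(t)^k$ over ordered tuples directly and add the equivalent regrouping-by-content argument, which is if anything cleaner.
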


This  proposition  shows that if the $\E_j(\vec z\, )\in\Z$ for all $j=0,\ldots, n$, then also  $\H_m(\vec z\, )\in\Z$  for all $m\in\N_0$.

For the proof of Proposition \ref{P-H-E} we need the multinomial theorem, which states that \cite[\S 1.2]{St1}
$$
(x_1+\ldots +x_n)^k=
\sum_{\vec\alpha:=(\alpha_1,\ldots,\alpha_n)\in\N_0^n\atop	\Sigma(\vec\alpha)=k}
\binom{k}{\alpha_1,\ldots,\alpha_n}
x_1^{\alpha_1}\cdots x_n^{\alpha_n}
$$ 
for any $x_1,\ldots,x_n\in\C$, $k\in\N_0$. Let us recall also the generalized Leibnitz formula 
$$
	\frac{{\rm d}^m}{{\rm d}t^m}\prod_{j=1}^kf_j(t)
	=
	\sum_{\substack{m_1+\dots+m_k=m\\ m_i\in\N_0}}
	\binom{m}{m_1,\ldots,m_n}
	\prod_{j=1}^k	\frac{{\rm d}^{m_j}}{{\rm d}t^{m_j}}f_j(t).
$$ 
\begin{proof}[Proof of Proposition \ref{P-H-E}] Clearly, both identities \eqref{e-f1} and \eqref{e-f2} are trivially true when $m=0$. Thus, we assume that $m\in\N$.
	
	By using \eqref{genH} and \eqref{genE}, we write 
	\begin{align*}
		H(t; \vec z\,)&=E(-t; \vec z\,)^{-1}=\left(\sum_{j=0}^n(-1)^j\E_j(\vec z\, )\, t^j\right)^{-1}\\
		&=
		\frac{1}{1- (\E_1 t-\E_2 t^2 +\cdots+ t^n(-1)^{n-1}\E_n)} :=
		\frac{1}{1- F_n(t;\vec z\,)},
	\end{align*}
	where we have let 
	$$
	\E_j:=\E_j(\vec z\,)\quad \hbox{and}\quad 
		F_n(t;\vec z\,):=   \E_1 t-\E_2 t^2 +\cdots+ (-1)^{n-1}t^n\E_n,
		$$
	for brevity.  Thus, expanding   as a formal power series gives  
	$$
	H(t; \vec z\,) =\sum_{k=0}^\infty F_n(t;\vec z\,)^k,
	$$ 
	so that \eqref{e-gen-funct1} yields
	$ \dsize
	\H_m(\vec z\,)=\frac{1}{m!}\sum_{k=0}^\infty \frac{{\rm d}^m}{{\rm d}t^m} F_n(t;\vec z\,)^k_{\vert_{t=0}}.
	$

	Now, observe 
	that $F_n^{(0)}(0;\vec z\,):=F_n(0;\vec z\,)=0$ and
	$$
	F_n^{(m)}(0;\vec z\,):=\frac{{\rm d}^m}{{\rm d}t^m}F_n(t;\vec z\,)\vert_{t=0}=
		\begin{cases}
		m! (-1)^{m+1}\E_m(\vec z\,), &\mbox{if $1\le m\le n$},\\
		0,& \mbox{otherwise}.
	\end{cases}
	$$
	Therefore, by applying the generalized Leibnitz formula we see that
	\begin{align*}
		\frac{{\rm d}^m}{{\rm d}t^m}F_n(t;\vec z\,)^k\vert_{t=0}
		&=m!
		\sum_{\substack{m_1+\dots+m_k=m\\ 1\le m_i\le n}}
		\frac{F_n^{(m_1)}(0;\vec z\,)\cdots F_n^{(m_k)}(0;\vec z\,)}{m_1!\cdots m_k!}
		\\
		&=
		(-1)^{m+k}m!\sum_{\substack{m_1+\dots+m_k=m\\ 1\le m_i\le n}}
		\E_{m_1}(\vec z\,)\cdots\E_{m_k}(\vec z\,).
	\end{align*}
	Since $m_1+\dots+m_k=m$ can hold with $m_i\in\N$ only  when $1\le k\le m$, the previous series reduces to a finite sum as
	\begin{align}\label{finitereduction}
		\H_m(\vec z\,)&=\frac{1}{m!}\sum_{k=1}^m \frac{{\rm d}^m}{{\rm d}t^m} F_n(t;\vec z\,)^k_{\vert_{t=0}}\\
		&= \sum_{k=1}^m(-1)^{m+k}\hskip -3mm\sum_{\substack{m_1+\dots+m_k=m\\ 1\le m_i\le n}}
		\E_{m_1}(\vec z\,)\cdots\E_{m_k}(\vec z\,),\nonumber
	\end{align}
	which is \eqref{e-f1}.

	In order to prove \eqref{e-f2}, first note that the multinomial theorem yields
		\begin{align*}
	F_n(t;\vec z\,)^k:=&\ (-1)^k(\E_1(-t)+\E_2(-t)^2 +\cdots+ (-t)^{n}\E_n)^k\\
	=&\	(-1)^k\sum_{\substack{\vec\alpha\in\mathbb N_0^n\\ \Sigma(\vec\alpha)=k}}
	\binom{k}{\alpha_1,\ldots,\alpha_n}
	\prod_{j=1}^n \E_j(\vec z\, )^{\alpha_j}\,(-t)^{\w(\vec\alpha)}.
		\end{align*}
	Since for any $\vec\alpha\in\mathbb N_0^n$  one has
	$$
	\frac{{\rm d}^m}{{\rm d}t^m}(-t)^{\w(\vec\alpha)}=
	\begin{cases}
		(-1)^{\w(\vec\alpha)}\frac{\w(\vec\alpha)!}{(\w(\vec\alpha)-m)!}t^{\w(\vec\alpha)-m}, &\mbox{if $1\le m\le \w(\vec\alpha)$},\\
		0,& \mbox{otherwise,}
	\end{cases}
	$$
	from \eqref{finitereduction} we see that
$$
		\H_m(\vec z\,)=
	(-1)^m\sum_{k=1}^m(-1)^k\sum_{\substack{\vec\alpha\in\mathbb N_0^n\\
			\Sigma(\vec\alpha)=k,\, \w(\vec\alpha)=m}}
	\binom{k}{\alpha_1,\ldots,\alpha_n}
	\prod_{j=1}^n\E_j(\vec z\, )^{\alpha_j}.
	$$
	Hence, this reduces to \eqref{e-f2} because $\Sigma(\vec\alpha)\le \w(\vec\alpha)$ for any $\vec\alpha\in\mathbb N_0^n$, so that there exists  exactly one value
	$k=\Sigma(\vec\alpha)\in\{1,\ldots,m\}$ such that $\w(\vec\alpha)=m$. 
	
	The proposition is proved.
\end{proof}

 \begin{Rem}\label{C-Jacobi-TrudyCor} 
From \eqref{finitereduction} we obtain the following coefficient
 formula: if $X_m$ is the roots multiset of the complex polynomial $\P(z)=\sum_{j=0}^m a_jz^{m-j}$, then
 	$$
 	\H_n(X_m)=\frac{1}{n!}  
 	\sum_{k=1}^n \frac{{\rm d}^n}{{\rm d}t^n} \big(1-a_0^{-1}t^m\P(t^{-1})\big)^k_{\vert_{t=0}}.
 	$$
Indeed, by using the Vieta formulas \eqref{e-classic-Viete} we see that
 	\begin{align*}  
 		1-a_0^{-1}t^m \P(t^{-1})&=1-a_0^{-1}\sum_{j=0}^m a_jt^j\\
 		&=\E_1(X_m) t -\E_2(X_m) t^2 +\cdots+ (-1)^{m-1} t^m\E_m(X_m) \\
 		&=F_m(t;X_m),
 	\end{align*}
 	where $F_m(t;X_m)$ is defined as in the proof of Proposition \ref{P-H-E}. Hence, 
 	the claimed identity for 
 	$\H_n(X_m)$ follows immediately from \eqref{finitereduction}.
  \end{Rem}

 \section{Proofs of the results}\label{Ths}
 	
  \begin{proof}[Proof of Theorem \ref{T-Viete-3}]
  	Since
  	$ \dsize
  	\P(z)=a_0\prod_{x\in X_m}(z-x)
  	$, it is plain that 
  	(a) and (b) are equivalent.
  	In order to prove the equivalence of (b) and (c), let us set
  	$$
  	\cal A(t):=t^m\P(t^{-1})=\sum_{j=0}^m a_jt^j,
  	\qquad
  	\cal B(t):=t^n\q_n(t^{-1})
  	=\prod_{y\in J_n}(1-yt).
  	$$
 In view of Proposition \ref{L-pol-E}, we can write 
  	$$
  		\mathcal R(t):=\frac{\cal A(t)}{\cal B(t)}=t^{m-n}\frac{\P(t^{-1})}{\q_n(t^{-1})}
  		=
  		\Big(\sum_{j=0}^m a_jt^j\Big)
  		\Big(\sum_{r=0}^{\infty}\H_r(J_n)t^r\Big)
  		=
  		\sum_{k=0}^{\infty}c_kt^k,
  	$$
  	where  
  	$ \dsize
  	c_k:=\sum_{s=0}^{\min(k,m)}a_s\H_{k-s}(J_n)$ for every 
  	$ k\ge0$. 
  	
  	Now, if (b) holds, then $\P(z)=\q_n(z)\cal S(z)$ for some polynomial
  	$\cal S(z)$ of degree $m-n$, so that
  	$ 
  	\mathcal R(t)=t^{m-n}\cal S(t^{-1})
  	$ 
  	is a polynomial of degree at most $m-n$. Consequently,  $c_k=0$ for every $k>m-n$, yielding (c).

  	Conversely, assume that (c) holds true, i.e., 
  	
  	\centerline{$c_k=0$ for every 
  	$k\in\{m-n+1,\ldots,m\}$.}
  	 
  	Since $\cal B(0)=1$, we can set
  	$ 
  	\cal B(t)=1+b_1t+\cdots+b_nt^n 
  	$
  	and compare the coefficients in $\cal B(t)\mathcal R(t)=\cal A(t)$. 
  	For every integer $k>m$  we have that 
  	$$
  	c_k+b_1c_{k-1}+\cdots+b_nc_{k-n}=0
  	$$
  	because the degree of $\cal A(t)$ is at most $m$. In particular,
  	for $k=m+1$ this is
  	$$
  	c_{m+1}+b_1c_m+\cdots+b_nc_{m-n+1}=0,
  	$$
  	yielding $c_{m+1}=0$ because $c_m=c_{m-1}=\ldots=c_{m-n+1}=0$ by hypothesis. Thus, we can proceed by induction to show that   $c_k=0$ for every $k>m$.
  	Together with the hypothesis (c), this implies that
  	$$
  	\mathcal R(t)=\sum_{k=0}^{m-n}c_kt^k.
  	$$ 
  	Since
  	$ \dsize 
  	\frac{\P(z)}{\q_n(z)}
  	=z^{m-n}\mathcal R(z^{-1}),
  	$ 
  	we can finally  write
  	$$
  	\P(z)
  	=
  	\q_n(z)\sum_{k=0}^{m-n}c_kz^{m-n-k},
  	$$
  	yielding  (b).  
  \end{proof}

\begin{proof}[Proof of Corollary \ref{C-h-sections}] It is plain that if 	$ 
	\cal G(z)$ divides $\cal P_{h,s}(z)$
	for every $s\in\{0,1,\ldots,h-1\}$, then 	$\cal G(z^h)$ divides $\cal P(z)$. 
	
	Conversely,
	assume that  $\cal P(z)=\cal G(z^h)\cal T(z)$  for some complex polynomial $\cal T(z)$. Thus, we can write 
	$$ 
	\cal P(z)=\sum_{s=0}^{h-1}z^s\cal P_{h,s}(z^h)
	=\sum_{s=0}^{h-1}
	z^s\cal G(z^h)\cal T_{h,s}(z^h),
	$$
	where $\cal T_{h,s}(z)$ is defined in an analogous way as $\cal P_{h,s}(z)$ in \eqref{e-def-Ps}.
	Hence, the polynomial 
	$$
	\cal F(z):=\sum_{s=0}^{h-1}\big(\P_{h,s}(z^h)-\cal G(z^h)\cal T_{h,s}(z^h)\big)z^s
	$$
	is identically zero on $\C$, i.e., all its coefficients are equal to zero.
	
	Now, note that
	$\P_{h,s}(z^h)-\cal G(z^h)\cal T_{h,s}(z^h)$ is a polynomial in $z^h$ involving disjoint sets of powers of   $z$ for distinct values of  $s\in\{0,1,\ldots,h-1\}$. Indeed, it suffices to observe that if $0\leq s,\,s'<h$ and $j,\,j'\geq 0$, then
	$
	s+jh=s'+j'h
	$
	if and only if $s=s'$ and $j=j'$. Thus,  being $\cal F(z)$ the zero polynomial, we must conclude that
	$$
	\cal P_{h,s}(z^h)=\cal G(z^h)\cal T_{h,s}(z^h)
	\ \hbox{for every}\   s\in\{0,1,\ldots,h-1\},
	$$
	showing that $\cal G(z)$ divides $\P_s(z)$ for every $s=0,\ldots,h-1$, as required.
\end{proof}

 \begin{proof}[Proof of Corollary \ref{C-divis-cond}]  	Let $J_n$ be the   roots multiset  of $\q$, so that 
 	$\E_j(J_n)=(-1)^jb_j$ by  the Vieta formulas \eqref{e-classic-Viete}. Thus, 
	by  using Proposition \ref{P-H-E} we have that
		\begin{align*}
		\H_\ell(J_n) &= 
		\sum_{s=1}^\ell (-1)^s\hskip-3mm\sum_{\substack{\ell_1+\cdots+\ell_s=\ell\\ 1\le \ell_i\le n}}
		b_{\ell_1}\cdots b_{\ell_s}
		\\ 
		&=\sum_{\substack{\vec\alpha\in\mathbb N_0^n\\\w(\vec\alpha)=\ell}}
		(-1)^{\Sigma(\vec\alpha)}
		\binom{\Sigma(\vec\alpha)}{\alpha_1,\ldots,\alpha_n}
		\prod_{j=1}^{n}b_j^{\alpha_j}.
	\end{align*}
	Hence, the claimed formulas follow
	 from Theorem \ref{T-Viete-3} for $\q(z)$ divides $\P(z)$ if and only if 
	$$ 
	\sum_{\ell=0}^{k}a_{k-\ell}\H_\ell(J_n)=0
	$$ 
	for every $k\in\{m-n+1,\ldots,m\}$.
\end{proof} 
 
\begin{proof}[Proof of Proposition \ref{P-product2poly}]
	Set
	$$
	\cal A(t):=t^m\P(t^{-1})=\sum_{j=0}^m a_jt^j\quad \hbox{and}\quad 
	\cal S_r(t):=\sum_{s=0}^r\H_s(X_m)t^s.
	$$
	By using Proposition \ref{L-pol-E} we write 
	$$
	\cal A(t)H(t;X_m)=A(t) \Big(\cal S_r(t)+\sum_{s=r+1}^\infty
	\H_s(X_m)t^s\Big)=1,
	$$
	so that 
	$\dsize
	\cal A(t)\cal S_r(t)
	=1-\cal A(t)\sum_{s=r+1}^\infty
	\H_s(X_m)t^s$. Now, after noticing  that $\cal A(t)\cal S_r(t)$  has degree $r+m$, we infer
	\begin{align*} \cal A(t)\sum_{s=r+1}^\infty
		\H_s(X_m)t^s&=
		\sum_{k=r+1}^{\infty}t^k
		\sum_{j=0}^m a_j\sum_{s=r+1\atop s+j=k}^\infty
		\H_s(X_m)\\
		&=\sum_{k=r+1}^{r+m}t^k
		\sum_{j=0}^m a_j\sum_{s=r+1\atop s+j=k}^\infty
		\H_s(X_m)\\
		&=\sum_{k=1}^{m}t^{k+r}
		\sum_{j=0}^m a_j\sum_{s=r+1\atop s+j=k+r}^\infty
		\H_s(X_m).
	\end{align*}
	 
	Since the conditions  $s\ge r+1$ and $s+j=k+r$ imply that $j\le k-1$, one has 
	$$
	\cal A(t)\cal S_r(t)
	=1-\cal A(t)\sum_{s=r+1}^\infty
	\H_s(X_m)t^s	=1+\sum_{k=1}^{m}c_{k,r}t^{k+r}
	$$
	with 
	$$
	c_{k,r}:=-\sum_{j=0}^{k-1}a_j\H_{k+r-j}(X_m),
	\quad k\in\{1,\ldots,m\}
	$$
	as in \eqref {e-ckr}.
	Substituting $t=z^{-1}$ and multiplying
	by $z^{m+r}$, the previous identity   gives
	$\dsize 
	\P(z)\q_r(z)
	=z^{m+r}+\sum_{k=1}^m
	c_{k,r}z^{m-k},
	$
	as claimed.
	
	Now, let us take $s\in\C$ fixed and write 
	$$
	z^{m+r}-s
	=\P(z)\q_r(z)-\bigl(\cal R_r(z)+s\bigr),
	$$
	where plainly $\deg(\cal R_r+s)<m$. Thus, we see that $\P(z)$
	divides $z^{m+r}-s$ if and only if
	$\cal R_r(z)=-s$. In view of \eqref {e-ckr}, this  is equivalent to saying that  $c_{m,r}=-s$ and
	$c_{k,r}=0$ when $1\le k<m$.
\end{proof}

\begin{proof}[Proof of Proposition \ref{P-Kronecker-divisibility}]
	(a)$\Rightarrow$(b): suppose that $\cal K(z)$ is a Kronecker polynomial.
	Since $\cal K(0)\ne 0$, by
	Kronecker's theorem we deduce that all the roots of $\cal K(z)$ are roots of unity. Assume  first that  all these roots are simple and let $N$ be a common
	multiple of their orders. If $t>1$ is such that
	$t\equiv 1\pmod N$, 
	then $\alpha^t=\alpha$ 
	for every root $\alpha$ of $\cal K(z)$. Thus, we
	deduce that $\alpha$ is also a root of $\cal K(z^t)$. Consequently,
	$\cal K(z)$ divides $\cal K(z^t)$ because its roots are simple.
	
	Now, assume that  $\alpha_1,\alpha_2,\ldots,\alpha_s$ are the roots of $\cal K(z)$ which are not simple, let $d_1,d_2,\ldots,d_s\ge 2$ be their respective multiplicity.  Since all the roots of the polynomial 
	$$ 
	\cal Q(z) :=\frac{\cal K(z)}{(z-\alpha_1)^{d_1-1}\cdots (z-\alpha_s)^{d_s-1}}
	$$
	are roots of unity and simple,
	by the first part of the proof it follows that  $\cal Q(z)$ divides $\cal Q(z^t)$, where $t$ is defined as before. 
	Further,  
	$(z-\alpha_j)^{d_j-1}$ divides $(z^t-\alpha_j)^{d_j-1}$ for $\alpha_j$ is a root of $z^t-\alpha_j$ by definition of $t$.
	Thus, we conclude that
	$\cal K(z)= \cal Q(z) (z-\alpha_1)^{d_1-1}\cdots (z-\alpha_s)^{d_s-1}$ divides $\cal K(z^t)=\cal Q(z^t) (z^t-\alpha_1)^{d_1-1}\cdots (z^t-\alpha_s)^{d_s-1}
	$, as claimed.
	
	(b)$\Rightarrow$(a): 	assume  that there exists an integer $t>1$ such that
	$\cal K(z)$ divides $\cal K(z^t)$. This implies that if
	$\alpha$ is a root of $\cal K(z)$, then
	so is $\alpha^t$. Further,  it must be $\alpha\ne0$ by the hypothesis 	$\cal K(0)\ne0$. Thus, we can iterate this argument and conclude that 
	$ 
	\alpha,\alpha^t,\alpha^{t^2},\ldots
	$ 
	are roots of $\cal K(z)$. Since there are only finitely
	many distinct roots, it must be
	$ 
	\alpha^{t^r}=\alpha^{t^s}
	$ 
	for some  distinct integers $r,s\in\N_0$, implying that 
	$\alpha$ is a root of unity. Hence, $\cal K(z)$ is
	a Kronecker polynomial.

	(a)$\Rightarrow$(c):  if $ 
	\cal K(z)$  is a Kronecker polynomial, then it admits a representation as	\eqref{e-Kronecker-factorization}, with $r=0$ for $\cal K(0)\ne 0$, namely 
	$$
	\cal K(z)
	=
	\prod_{j\in K}\Phi_j(z)^{b_j},
	$$
	where $K\subset\mathbb N$ is finite and
	$b_j\in\mathbb N$. Note that if $\alpha$ is a root of 
	$\cal K(z)$, then $\alpha^j=1$ for some $j\in K$.
	Therefore, denoting by $N$  the  least common multiple of the integers $jb_j$, with $j\in K$, 
	we deduce that all the roots of $\cal K(z)$ are roots of $z^N-1$.

	Since all the roots of any cyclotomic polynomial are simple, we deduce that 
	the multiplicity of any root of $\cal K(z)$ is not greater than $N$, so that
	$\cal K(z)$ must divide $(z^N-1)^s  $ for some $s\in\N$.
	
	(c)$\Rightarrow$(a):	assume  that there exist $s\in\N$ and an integer $N>1$ such that
	$ 
	\cal K(z)$ divides $(z^N-1)^s$. Thus, if
	$\alpha$ is a root of $\cal K(z)$, then it must be $\alpha^N  =1$, i.e., 
	$\alpha$ is a root of unity. Hence, $ 
	\cal K(z)$  is
	a Kronecker polynomial.
\end{proof}

\begin{proof}[Proof of Theorem \ref{T-kronekerequiv-alt}]
	(a)$\Leftrightarrow$(b):
	by Proposition~\ref{P-Kronecker-divisibility},
	$\mathcal K(z)$ is a Kronecker polynomial if and only if $\mathcal K(z)$ divides $\mathcal K(z^t)$
	for some integer $t>1$. 
	
	Thus, we can apply  Theorem\ref{T-Viete-3} by taking 	$\mathcal Q_n(z)=\mathcal K(z)$ and
	$$ 
	\mathcal P(z):=\mathcal K(z^t)
	=\sum_{r=0}^{tn}b_r z^{tn-r},
	$$
	with
	$$
	b_r:=
	\begin{cases}
		a_j,& \mbox{if $r=tj$ for some $j\in\{0,1\ldots, n\}$},\\
		0,& \mbox{otherwise}.
	\end{cases}
	$$
	We deduce that the divisibility of $\mathcal K(z^t)$ by $\cal K(z)$ is equivalent to  
	$$
	0=\sum_{r=0}^{k}b_r\H_{k-r}(X_n)
	=\sum_{j=0}^{[k/t]}
	a_j\H_{k-tj}(X_n)
	$$
	for every 	$k\in\{(t-1)n+1,\ldots, tn\}$,
	yielding \eqref{e-sum-K}.

	(b)$\Leftrightarrow$(c):  first, note that
	$\H_0(X_n)=F_0=1$. Then, let us apply
	Proposition \ref{P-H-E}   and \eqref{e-classic-Viete}  to see that 	
	\begin{align*} \H_k(X_n) &=	\sum_{s=1}^k (-1)^{k-s}\hskip-4mm\sum_{\substack{k_1+\cdots+k_s=k \\k_i\in\N}}
		\E_{k_1}(X_n )\cdots\E_{k_s}(X_n  ) 
		\\ &	=
		\sum_{s=1}^k (-1)^s \sum_{\substack{k_1+\cdots+k_s=k \\k_i\in\N}}
		a_{k_1} \cdots a_{k_s} =F_k
	\end{align*}
	for every $k\in\N$.
	Hence,
	\eqref{e-sum-K} and \eqref{e-Fk=0} are equivalent.
\end{proof}

 \begin{proof}[Proof of Proposition \ref{generalpropertyBorwein}]
	For
	any given $s\in\{1,\ldots, n\}$ we define 
	$$
	\P_s(z):=\prod_{\ell:\, d_\ell\ge s}\Phi_{j_\ell}(z).
	$$
	Notice that $\P(\alpha)=0$ if and only if there exists a unique $\ell\in\{1,\ldots, k\}$ such that $\Phi_{j_\ell}(\alpha)=0$. Further, if this is the case, then $d_\ell$ is the multiplicity of $\alpha$ as a root of $\P(z)$ and $\P_s(\alpha)=0$  for every $s\in\{1,\ldots, d_\ell\}$. Thus, $d_\ell$ is also the multiplicity of $\alpha$ as a root of $\P_1(z)\cdots \P_n(z)$, so that it must be 
	$$
	\P(z)=\P_1(z)\cdots \P_n(z).
	$$
	Since for every given $s\in\{1,\ldots, n\}$ one has that $P_s(z)$ divides
	$z^{L_s}-1$, where $L_s:=\operatorname{lcm}\{j_\ell:\, d_\ell\ge s\}$, we choose $N_1=L_1$ and $N_s\in\N$  as a  multiple of $L_s$ when $2\le s\le n$ and such  that \eqref{e-ineq1} is satisfied. 
	Hence, we conclude that $\P(z)$ divides $\cal B(z)$, as claimed.
	
	In order to see that $\cal B(z)$ is a Borwein polynomial, first let us write
	$$
	\cal B(z)=
	\sum_{S\subseteq\{1,\ldots,n\}}
	(-1)^{n-|S|}z^{k_S},\ \hbox{where}\ k_S:=
	\begin{cases}
		\dsize\sum_{s\in S}N_s,& \mbox{if  $S\not=\emptyset$},\\
		0,& \mbox{otherwise.}
	\end{cases}
	$$
	Then, observe that
	the inequalities \eqref{e-ineq1}  ensure that
	distinct subsets $S$ of $\{1,\ldots,n\}$ give distinct exponents $k_S$. 
	Hence, all coefficients of $\cal B(z)$ belong to $\{-1,0,1\}$. Moreover,
	$\cal B(0)=(-1)^n\ne0$, as required.
\end{proof}

\end{document}